\documentclass[11pt, oneside]{article}
\usepackage{amsfonts}
\usepackage{mathrsfs}
\usepackage{color}
\usepackage[colorlinks]{hyperref}
\usepackage{latexsym}
\usepackage{amssymb}
\usepackage{amsmath}
\usepackage{enumerate}
\usepackage{amsthm}
\usepackage{indentfirst}
\usepackage{mathtools}
\usepackage{tikz}
\usepackage{psfrag}
\usepackage{graphicx}
\usepackage{bm}
\usepackage{indentfirst}
\usepackage{esint}
\usepackage{color}
\usepackage{tikz}

\usepackage{pgfplots}
\usepackage{multirow}
\usepackage[rflt]{floatflt}
\usepackage{float}
\usepackage{authblk}
\usepackage{ulem}
\usepackage[square, comma, sort&compress, numbers]{natbib}
\usepackage{verbatim}
\numberwithin{equation}{section}
\allowdisplaybreaks

\newenvironment{proof2.1}{\medskip\noindent{\bf Proof of the Theorem 2.1:}\enspace}{\hfill \qed \newline \medskip}

\newenvironment{proof2.2}{\medskip\noindent{\bf Proof of the Theorem 2.2:}\enspace}{\hfill \qed \newline \medskip}

\newtheorem{theorem}{\color{black}\indent Theorem}[section]
\newtheorem{lemma}{\color{black}\indent Lemma}[section]

\newtheorem{definition}{\color{black}\indent Definition}[section]
\newtheorem{remark}{\color{black}\indent Remark}[section]

\newtheorem{example}{\color{black}\indent Example}[section]

\pagestyle{myheadings} \textwidth=16truecm \textheight=23truecm
\oddsidemargin=0mm \evensidemargin=0mm
\usepackage{amssymb,amsmath}
\headheight=5mm \headsep=0mm \topmargin=0mm

\begin{document}
\title{\LARGE\bf { Asymptotic Stability and the Forcing Term: An Analysis of Non-Newtonian Thin-Film Flows}
 }
\author[1]{Jinhong~Zhao}
\author[1]{Bin Guo\thanks{Corresponding author\newline \hspace*{6mm}{\it Email
addresses:} jhzhao23@mails.jlu.edu.cn~(Jinhong~Zhao),~bguo@jlu.edu.cn~(Bin Guo)}}

\affil[1]{School of Mathematics, Jilin University, Changchun, 
 Jilin Province 130012, China}

\renewcommand*{\Affilfont}{\small\it}
\date{} \maketitle
\vspace{-20pt}

{\bf Abstract:}\ We study a class of fourth-order quasilinear degenerate parabolic equations under both time-and space-dependent and time-and space-independent forces, modeling non-Newtonian thin-film flow over a solid surface in the "complete wetting" regime.  By analyzing the quantitative properties of solutions to non-autonomous differential inequalities and employing refined integral estimates, we derive two-sided convergence rate estimates for the solution. Numerical simulations are further provided to illustrate the consistency of our main results with the observed physical phenomena.

{\bf Mathematics Subject Classification:} 76A05, 76A20, 35B40, 35K35.

{\bf Keywords:} Power-law fluid;  Fourth-order parabolic equation; Inhomogeneous forces; Long-time behavior.

\section{Introduction}
\subsection{Problem}
In this paper, we investigate a class of fourth-order quasilinear degenerate parabolic equations with forcing terms:
\begin{equation}\label{1.1}
	\begin{cases}
		 u_t + a\left(u^{\alpha+2} |u_{xxx}|^{\alpha-1}u_{xxx} \right)_x = f(t,x),~~&(t,x) \in (0,\infty) \times\Omega,\\
		 u_x(t,x) =u_{xxx} (t,x) = 0,~~&(t,x) \in (0,\infty) \times  \partial \Omega,\\
		u(0,x)=u_{0}(x),~~&x \in \Omega,
	\end{cases}
\end{equation}
where 	$ \Omega=(0,L)$ is a bounded interval on the real line.    
The flow-behavior index $\alpha$ describes the rheological properties of the fluid:  $\alpha=1$ corresponds to a Newtonian fluid, while  $\alpha\neq 1$ corresponds to a non-Newtonian power-law fluid, with $\alpha>1$  representing shear-thinning fluid  (e.g., tomato ketchup), and  $\alpha<1$  representing shear-thickening fluid (e.g., cornstarch suspension).
The positive constant $a$ depends on the flow-behavior index, the surface tension coefficient, and the characteristic viscosity. 
$u(t,x)$ denotes the film thickness, and the forcing term $f(t,x)$ represents a volumetric source ($f>0$) or sink ($f<0$),  i.e., the volume of fluid injected or extracted per unit time per unit basal area.
Problem \eqref{1.1} describes the dynamical evolution of the thickness of a thin film with power-law rheology, driven by surface tension and subject to external injection or suction (see Figures \ref{tuu}). 
The Neumann-type boundary conditions $u_x=0$ and $u_{xxx}=0$ on $\partial \Omega $ reflect the zero-contact angle condition and the no-flux condition, respectively.

\begin{figure}[htbp]
	\centering
	\includegraphics[width=0.7\textwidth]{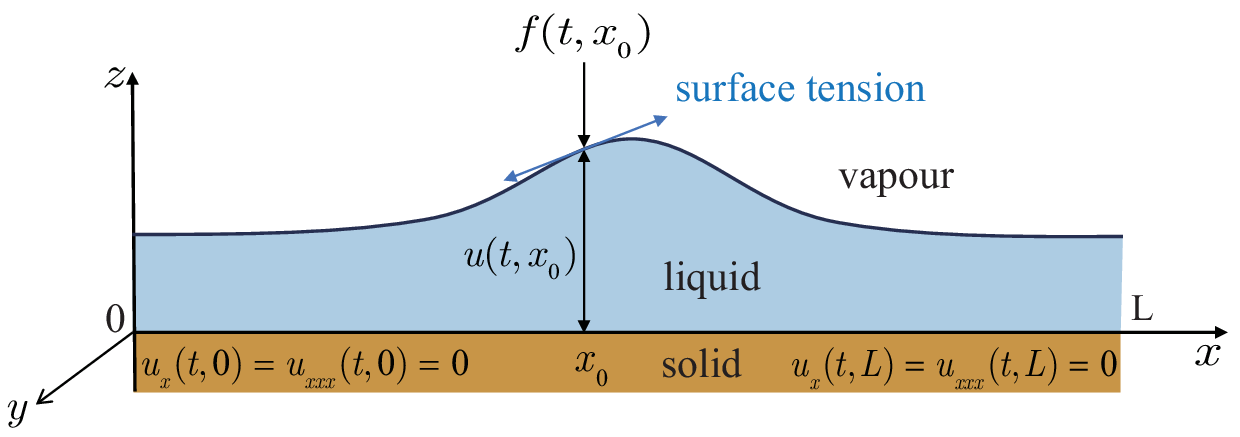}
	\caption{Cross-section of liquid film on impermeable solid bottom}\label{tuu}
\end{figure}


\subsection{Novelty and Significance}

To the best of the authors' knowledge, existing studies on the long-time behavior of non-Newtonian fluid films have mostly focused on autonomous systems—that is, systems where external mass exchange is neglected and film evolution is assumed to be driven solely by surface tension.  
In contrast, relatively little attention is given to the non-autonomous case. 

In practical settings, however, thin-film systems often undergo mass exchange with the external environment, such as through injection or evaporation. For example, lubricant injection through a porous substrate ($f(t,x) > 0$), thin-film growth in chemical vapor deposition ($f(t,x)>0$), 
and simulation of coating drying processes ($f(t,x)< 0$).  Thus, we ask some related questions:

$\bullet$ How is the dynamic behavior of the film governed by external injection or suction?

$\bullet$ Can a forcing term be designed to drive the film asymptotically to the desired state?

Before providing affirmative answers to the above questions (detailed in Sections 3–5), we first review previous studies and recent progress on thin-film problems. 

For $\alpha=1$ and $f=0$,  equation $\eqref{1.1}_1$ simplifies to the classical thin-film equation:
\begin{align}\label{1.3}
	u_t + \left(u^n u_{xxx} \right)_x = 0,
\end{align}
where $n$ corresponds to  distinct slip conditions on the liquid-solid interface:   "strong slippage" ($n \in (1, 2)$), "weak slippage" ($n \in (2, 3)$), "no-slip condition"($n = 3$) \cite{AOS}, "Navier-slip condition" ($n = 2 $) \cite{WJA}, and  Hele-Shaw flow ($n = 1$) \cite{LGF}.

The theoretical study of equation $\eqref{1.3}$ began with Bernis and Friedman \cite{FBAF}, who first proved the existence of global non-negative weak solutions in one-dimensional space, as well as the positivity and uniqueness for $n \ge 4$.
The key to this theoretical development lies in energy and entropy estimates.
Subsequently,  Beretta, Bertsch, and Dal Passo \cite{BBDP}, and Bertozzi and Pugh \cite{BP} (under periodic boundary conditions) further proved the existence and asymptotic behavior of one-dimensional non-negative weak solutions.  
Later, Dal Passo, Garcke and Gr\"{u}n \cite{DPGG} refined $\alpha-$entropy estimates used in \cite{CMEH} and \cite{GGD} and extended the results of either \cite{BBDP} or \cite{BP} to the higher-dimensional cases $N=2,3$. 
Their crucial element was the generalization of the formula of integration by parts
	$$\int_{0}^{L} f'(u)\cdot u_{x}^2  \cdot u_{xx} {\rm d}x = -\frac{1}{3} \int_{0}^{L} f''(u) \cdot u_x^4 {\rm d}x$$
for appropriately smooth functions with $u_{x}(0)=u_{x}(L)=0$ to higher dimensions, which will be essential in order to obtain entropy estimates. 
For further results in higher dimensions, interested readers may refer to \cite{L1,  L2}.

As for the convergence analysis, Carlen and Ulusoy \cite{CEU, EAS2} gave an explicit
 decay rate for the classical solution to equation \eqref{1.3} in the $H^1(\Omega)$-norm,  thereby improving the earlier results in the $L^1$ or $L^\infty$ norms obtained in \cite{BBDP, BP, CJT, LJS}.
 Furthermore, by means of standard regularized equations and tailored entropy functionals, Tudorascu \cite{ATL} demonstrated that the weak solution to equation \eqref{1.3} decays at an exponential rate within the $H^1(\Omega)$ norm. Very recently, Chugunova, Ruan, and Taranets \cite{CMRY} first investigated how the long-time behavior of solutions quantitatively depends on external forcing. Specifically, they analyzed the effect of time-independent, inhomogeneous forcing terms.

In contrast,  for the case $\alpha\neq1$, King \cite{KJR, KJRT} analyzed a doubly degenerate equation of the form
\begin{align}\label{1.4}
u_t + a\left(u^{\alpha+2} |u_{xxx}|^{\alpha-1}u_{xxx} \right)_x = 0,
\end{align}
which describes the spreading of power-law (Ostwald-de Waele) fluids.
A significant difference from \eqref{1.3} is that \eqref{1.4} lacks entropy estimates.  In this direction, for shear-thinning  power-law fluids  ($\alpha>1$),  Ansini and Giacomelli \cite{LALG} applied a two-step regularization scheme, Galerkin approximation, and energy methods to prove that  the solution of \eqref{1.4} converges to a steady state solution as time tends to infinity. This work was later improved and extended by Jansen, Lienstromberg, and Nik \cite{JJCLKN} who provided a systematic study of \eqref{1.4} for all  $\alpha>0$. Precisely, they established the global existence and long-time behavior of positive weak solutions as well as quantitative convergence rates. 
For more research on higher-order parabolic equations, some interesting references may be found in \cite{TPC,CLTPC,CLJJLV,LALGS,CLSM}.


Motivated by \cite{ATL,JJCLKN},  we analyze the asymptotic profile of solutions to \eqref{1.1} under the influence of forcing terms. Unlike the case $f(t,x)=0$, a nonzero $f(t,x)$  breaks the system's energy dissipation structure, leading to the loss of both $L^1$-mass conservation and the monotonicity of the energy functional. Consequently, standard techniques are not directly applicable, necessitating the development of new analytical tools. The main novelties of this work are as follows:

$\bullet$ The presence of time-and space-dependent forces  leads to non-autonomous differential inequalities through energy estimates. To conduct an asymptotic analysis of their solutions, we began with an ODE comparison principle (Lemma \ref{Lem01}), and then obtained two-sided estimates for the solution of the differential inequality (Lemma \ref{y1}). Consequently, this helps us to establish two‑sided estimates for the  convergence rate of the solution, which clearly demonstrates the substantial effect of the forcing terms on the convergence rate.  Briefly stated, our results can be summarized as follows:

 For $0<\alpha \le 1$, we established that the convergence of the solution to the mean value
$$\frac{\int_{\Omega}u_{0}(x)\, {\rm d}x+\int_{0}^\infty \int_{\Omega}  f(s,x) \, {\rm d}x   {\rm d}s}{|\Omega|}$$ is exponential when $f(t,x)$ decays exponentially, and polynomial when $f(t,x)$ decays polynomially. 
	This demonstrates that the asymptotic behavior of solutions is dominated by the forcing term.

However, for $\alpha>1$, we found that the solution always converges polynomially to $$\frac{\int_{\Omega}u_{0}(x)\, {\rm d}x+\int_{0}^\infty \int_{\Omega}  f(s,x) \, {\rm d}x   {\rm d}s}{|\Omega|},$$ regardless of whether $f(t,x)$ decays exponentially or polynomially.	Hence, the asymptotic behavior is governed by the dissipation term in this case.

$\bullet$ As a byproduct, we obtain the local $L^1$-in-time estimate for the dissipation functional $\int_{\Omega} u^{\alpha + 2} |u_{xxx}|^{\alpha+1} \, {\rm d}x$  as $t\to\infty$, and provide an explicit convergence rate. 

$\bullet$ Regarding autonomous systems,  \cite{JJCLKN} established explicit convergence rates for low initial energy solutions. However, in this paper, we develop unified methods to obtain analogous results for both low and high initial energy cases under the time-and space-independent force $f_0$. 	
In addition, we need to point out that the assumption \eqref{ff} on $f(t,x)$ in Theorem \ref{sth.long} is merely a technical condition to give a uniform positive lower bound for the solution; see Lemma \ref{lower.u}  for details. 
Particularly, it is not hard to verify that  the condition \eqref{ff} holds for the constant forcing term $f_0$.

\subsection{Main results of the paper}
In this paper, we obtain the global existence and long-time behavior of solutions to problem \eqref{1.1} under both time-and space-dependent and time-and space-independent forces at low initial energy.   To be more precise, our results are stated as follows.


For the  time-and space-dependent  force $f(t,x),$ we prove that the solution converges to $\frac{\int_{\Omega}u_{0}(x)\, {\rm d}x+\int_{0}^\infty \int_{\Omega}  f(s,x) \, {\rm d}x   {\rm d}s}{|\Omega|}$  in $H^1 (\Omega )$ and the local $L^1$-in-time estimate for the dissipation functional $\int_{\Omega} u^{\alpha + 2} |u_{xxx}|^{\alpha+1} \, {\rm d}x$  as $t\to\infty$, and provide an explicit convergence rate. 
Later,  we present numerical example (Example  \ref{ex1}), whose results are consistent with the theoretical analysis.


For the time-and space-independent force $f_0$, we prove that the solution coincides with $ \bar{u}_0 +tf_0$ in a finite time for $0<\alpha<1$, while it approaches this function at a polynomial rate for $\alpha>1$ and exponentially for $\alpha=1$.
Importantly, we observe that even at high initial energy, similar asymptotic behavior persists for sufficiently large $f_0$.


\subsection{Outline of the paper}

The structure of our paper is as follows:
In Section 2, we introduce some preliminary knowledge.  
Section 3 presents a key lemma concerning two-sided estimates for solutions to the differential inequality.
Next, in Sections 4 and 5, we investigate the global existence and  long-time behavior of positive weak solutions to the power-law thin-film problem \eqref{1.1} under both time-and space-dependent and time-and space-independent forces $f(t,x)$ and  $f_0$, respectively.
Numerical simulations of main results are presented in Section 6.  
We conclude in Section 7 with comments on the present study and an outlook for future research.


\section{Preliminaries}

In this section, we will give some notations and the related concepts.
In what follows, we denote by $\| \cdot \|_r (r \ge 1) $ the norm in $L^r(\Omega)$.
$C$ denotes a generic positive constant, which may differ at each appearance.  
For the sake of clarity, we omit the parameters $a$ in the subsequent analysis of problem \eqref{1.1}. 
In addition, we define  $\bar{u}_0=\frac{1}{|\Omega|}\int_\Omega u_0(x)\, {\rm  d}x$.

For $ k \in \mathbb{N} $ and $ p \in [1, \infty) $, we denote by $ W^{k,p}(\Omega) $ the usual Sobolev space with norm
$$  \| v \|_{W^{k,p}(\Omega)} = \left( \sum_{j=0}^{k} \| \partial^j v \|_p^p \right)^{\frac{1}{p}}.  $$
To account for the Neumann-type boundary conditions, we further introduce the Banach spaces  as follows:
\begin{align*}
W^{k,p}_{B}(\Omega) =
\begin{cases}
\{  v \in W^{k,p}(\Omega);v_x = v_{xxx} = 0 \text{ on } \partial \Omega \},
 &3+\frac{1}{p} <k\le 4 , \\
\{  v \in W^{k,p}(\Omega);v_x = 0 \text{ on } \partial \Omega \},
&1+\frac{1}{p} <k\le 3+\frac{1}{p}  , \\
W^{k,p}(\Omega),
&0 \le k\le 1+\frac{1}{p}.
\end{cases}
\end{align*}


\section{A key lemma: two-sided estimates}

In this section, we establish two-sided estimates for the solutions of inequality \eqref{05}.
We begin by introducing the following comparison principle. 
%
\begin{lemma}\label{Lem01}  Let $y(t)$ be the  non-negative solution of
	\begin{equation}\label{02}
	\begin{cases}
	y'(t)+\beta y^{\lambda}(t)= k(t)\geq0,\quad t>t_{0}\geq0,\\
	y(t_{0})=y_{t_0}>0.
	\end{cases}
	\end{equation}
	And
	$y_{1}(t)$ be the  solution of
	\begin{equation}\label{01}
	\begin{cases}
	y_{1}'(t)+\beta y_{1}^{\lambda}(t)=0,\quad t>t_{0}\geq0,\\
	y(t_{0})=y_{t_0}>0.
	\end{cases}
	\end{equation}	
	Then for any $t\geq t_{0}$,
	\begin{equation}\label{com}
	y(t)\geq y_{1}(t)=\begin{cases}
	y_{t_0}e^{-\beta (t-t_0)},  &\text{~if~}\lambda=1,\\
	y_{t_0}\big[1+(\lambda-1)\beta y_{t_0}^{\lambda-1}(t-t_{0})\big]^{\frac{1}{1-\lambda}},  ~&\text{~if~} \lambda \neq 1.
	\end{cases}
	\end{equation}
\end{lemma}

\begin{proof}
	First,   since the equation \eqref{01}$_1$ is separable, it is not hard to obtain the right side of  \eqref{com} by solving equation $y_1^{-\lambda}y'_1=-\beta.$
	Next, set $h(t)=y(t)-y_{1}(t),$ then $h(t_{0})=0.$ We claim that
	\begin{center}
		$h(t)\geq0,$  $~~~\forall t\geq t_{0}.$
	\end{center}
	If not, there exists $t_{1}>t_{0}$ satisfying $h(t_{1})<0$. Define $t_{2}=\sup\{t\in[0,t_{1}),h(t)=0\}.$ Hence
	\begin{center}
		$h(t_{2})=0$ and $h(t)<0,$  $~~~\forall t_{2}<t<t_{1}$.
	\end{center}
	Therefore, it is easy to prove that
	\begin{center}
		$h^{'}(t)=k(t)+\beta y_{1}^{\lambda}-\beta  y^{\lambda}\geq0$,  $~~~\forall t_{2}<t<t_{1},$
	\end{center}
	which implies
	$$h(t)\geq h(t_{2})=0.$$
	This is a contradiction. The proof is complete.
\end{proof}


Based on Lemma \ref{Lem01} stated above, we derive 
the following key lemma.
\begin{lemma}\label{y1}
	If $\beta>0$, $k(t) \geq0$ and $\int_{0}^{\infty}k(s)\,{\rm d}s < +\infty$,  then the non-negative solution $y(t)$ to
	\begin{equation}\label{05}
	\begin{cases}
	y^{'}(t)+\beta y^{\lambda}(t)\le k(t), \quad t> 0 ,\\
	y(0)=y_{0}>0,
	\end{cases}
	\end{equation}
	satisfies
	\begin{align*}
	y(t)\leq y_{0}+\int_{0}^{\infty}k(s)\,{\rm d}s := M_{0}<+\infty.
	\end{align*}
	Further, the solution $y(t)$ 
	satisfies the following inequality:
	
	{\rm(1)} If $\lambda>1$, then
	\begin{equation}\label{44}
		y_0\big[1+(\lambda-1)\beta y_0^{\lambda-1}t\big]^{\frac{1}{1-\lambda}} \le y(t)\leq
	M_{0}\left[ 1+\frac{\beta M_{0}^{\lambda-1}(\lambda-1)}{2}t\right] ^{\frac{1}{1-\lambda}}
	+\int_{\frac{t}{2}}^{t}k(s)\,{\rm d}s.
		\end{equation}
		
		{\rm(2)}	If $ 0<\lambda\le 1$, then
\begin{align}\label{44.1}
y_0e^{-\beta M_{0}^{\lambda-1} t}
\le y(t)
\leq M_{0}e^{\frac{-\beta M_{0}^{\lambda-1}t}{2}}
+ \int_{\frac{t}{2}}^{t}k(s)e^{-\frac{\beta M_{0}^{\lambda-1}(t-s)}{2}}\,{\rm d}s.
\end{align}	
\end{lemma}

\begin{proof}
First, integrating the equation $(\ref{05})_1$ from $0$ to $t$, we get
\begin{align*}
y(t)\le y_{0}+\int_0^{t}k(s)\,{\rm d}s-\beta\int_0^{t}y^{\lambda}(s)\,{\rm d}s
\leq y_{0}+\int_{0}^{\infty}k(s)\,{\rm d}s:=M_{0}<+\infty.
\end{align*}

Next, we prove \eqref{44} and \eqref{44.1}, there are two cases.
{\bf Case 1.} If $\lambda>1$, for any $t>0$, let $y_{2}(t)$ be the solution of
\begin{equation}\label{04}
\begin{cases}
y_{2}^{'}(s)+\beta y_{2}^{\lambda}(s)=0,\quad s\geq\frac{t}{2},\\
y_{2}\left( \frac{t}{2}\right) =y(\frac{t}{2}).
\end{cases}
\end{equation}
From $y^{'}(s)\le-\beta y^{\lambda}(s)+k(s)$ and Lemma \ref{Lem01}, then we have
\begin{align*}
y(t)-y(\frac{t}{2})&=\int_{\frac{t}{2}}^{t}y^{'}(s)\,{\rm d}s
\le-\beta \int_{\frac{t}{2}}^{t}y^{\lambda}(s)\,{\rm d}s+\int_{\frac{t}{2}}^{t}k(s)\,{\rm d}s\\
&\leq\int_{\frac{t}{2}}^{t}k(s)\,{\rm d}s-\beta \int_{\frac{t}{2}}^{t}y_{2}^{\lambda}(s)\,{\rm d}s
=\int_{\frac{t}{2}}^{t}k(s)\,{\rm d}s+\int_{\frac{t}{2}}^{t}y_{2}^{'}(s)\,{\rm d}s\\
&=y_{2}(t)-y_{2}(\frac{t}{2})+\int_{\frac{t}{2}}^{t}k(s)\,{\rm d}s,
\end{align*}
which implies
\begin{align}\label{y1.1}
	y(t)\leq y_{2}(t)+\int_{\frac{t}{2}}^{t}k(s)\,{\rm d}s.
\end{align}
Noting that
\begin{align}\label{y1.2}
y_{2}(t)&=y(\frac{t}{2})\left[ 1+\beta y^{\lambda-1}(\frac{t}{2})(\lambda-1)(t-\frac{t}{2})\right] ^{\frac{1}{1-\lambda}}\leq M_{0}\left[ 1+\beta M_{0}^{\lambda-1}(\lambda-1)\frac{t}{2}\right] ^{\frac{1}{1-\lambda}}.
\end{align}
Combining \eqref{y1.1}, \eqref{y1.2}, and Lemma \ref{Lem01}, we conclude
$$	y_0\big[1+(\lambda-1)\beta y_0^{\lambda-1}t\big]^{\frac{1}{1-\lambda}} \le y(t)\leq
M_{0}\left[ 1+\frac{\beta M_{0}^{\lambda-1}(\lambda-1)}{2}t\right] ^{\frac{1}{1-\lambda}}
+\int_{\frac{t}{2}}^{t}k(s)\,{\rm d}s.$$

{\bf Case 2.} If $0<\lambda\le 1$, then for any $t>0$, noticing that $y^{\lambda}(t)=y^{\lambda-1}(t)y(t)\geq M_{0}^{\lambda-1}y(t)$, we get
\begin{equation}\label{06}
\begin{cases}
y^{'}(t)+\beta M_{0}^{\lambda-1}y(t)\leq k(t),\\
y(\frac{t}{2})=y(\frac{t}{2})>0.
\end{cases}
\end{equation}
Since $(\ref{06})_1$ is a linear differential inequality, it follows from Lemma \ref{Lem01} that there exists an explicit formula for the solution
\begin{align*}
	y_0e^{-\beta M_{0}^{\lambda-1} t}
\le y(t)
\leq y(\frac{t}{2})e^{\frac{-\beta M_{0}^{\lambda-1}t}{2}}
+ \int_{\frac{t}{2}}^{t}k(s)e^{-\frac{\beta M_{0}^{\lambda-1}(t-s)}{2}}\,{\rm d}s.
\end{align*}
The proof of Lemma \ref{y1} is complete.

\end{proof}


\section{The time-and space-dependent force }

In this section, we first employ regularity theory for higher-order parabolic equations and energy methods to prove the local existence of weak solutions for all $\alpha>0$. Then, by establishing an uniform positive lower bound for  weak solutions, we extend solutions from local to global.
Finally, we develop a novel method to analyze the energy inequality, which further provides the long-time behavior of solutions.

\subsection{Local existence}

In this subsection, we give the definition and local existence of  positive weak solutions to problem \eqref{1.1}. 
The proof of this result is almost identical to that in \cite[Appendix A]{JJCLKN}, with the only modification being the proper handling of the time-and space-dependent force $f(t,x)$. 
Thus, we only state the main result herein.

\begin{definition}\label{sth.defi}

\rm For a given $T > 0$,  $u_0 \in H^1(\Omega)$ and $f(t,x) \in L^{\frac{\alpha + 1}{\alpha}}( (0, T); (W^{1,\alpha + 1}_{ B}(\Omega))') $.
	We say that a function
		$$
	u \in C\left([0, T]; H^1(\Omega)\right)  \cap L^{\alpha + 1}\left( (0, T); W^{3,\alpha + 1}_{  B}(\Omega)\right)  \text{~with~}   u_t \in L^{\frac{\alpha + 1}{\alpha}}\left( (0, T); (W^{1,\alpha + 1}_{ B}(\Omega))'\right)
	$$
		is a weak solution  to problem \eqref{1.1} in $[0,T] \times \Omega$  if $u$ satisfies
		
	{\rm (i)   } (Weak formulation) The function $u$ satisfies the differential equation $\eqref{1.1}_1$ in the weak sense, i.e.,
	\begin{align*}
	\int_0^T \int_\Omega u_t \varphi {\rm d}x {\rm d}t = \int_0^T \int_\Omega u^{\alpha + 2}  |u_{xxx}|^{\alpha - 1} u_{xxx}  \varphi_x \, {\rm d}x  {\rm d}t
	+\int_0^T \int_\Omega f(t,x) \varphi \, {\rm d}x {\rm d}t  ,
	\end{align*}
		for all test functions $\varphi \in L^{\alpha + 1}( (0, T); W^{1,\alpha + 1}_{ B}(\Omega))  $.
		
	{\rm (ii)}   (Initial and boundary values) The function $u$ satisfies the contact angle condition $u_x = 0$ on $\partial \Omega$ and the initial condition $\eqref{1.1}_3$ pointwise.   	
	%


\end{definition}

\begin{theorem}\label{sth.local}
	 Let $\alpha>0$.  If initial value $u_0 \in H^1(\Omega)$ satisfies $u_0(x) > 0$, for all $x \in \bar{\Omega}$, and
	 $f(t,x) \in L^{\frac{\alpha+1}{\alpha}}  ((0,\infty)  \times \Omega) \cap L^1( (0, \infty); H^1(\Omega))$,
	 then there exists a positive $T > 0$
	  and at least one positive weak solution of   problem \eqref{1.1}
	$$
	u \in C\left([0, T]; H^1(\Omega)\right)  \cap L^{\alpha + 1}\left( (0, T); W^{3,\alpha + 1}_{ B}(\Omega)\right)  \text{~with~}   u_t \in L^{\frac{\alpha + 1}{\alpha}}\left( (0, T); (W^{1,\alpha + 1}_{B}(\Omega))'\right)
	$$
	 on $(0, T)$ in the sense of Definition {\rm \ref{sth.defi}}. Moreover, such a solution has the following properties:
	
		{\rm (i)  }  (Positivity)
		$u$ is bounded away from zero:
		\begin{align}\label{sth.local.posi}
				  u(t,x) >0, \quad (t,x) \in [0,T]  \times \bar{\Omega}.
		\end{align}
	
		{\rm (ii) } (Mass equation)
		$u$ satisfies the mass equation  in the sense that
		\begin{align}\label{sth.local.mass}
		\int_{\Omega} u(t,x)   \, {\rm d}x = \int_{\Omega} u_0(x) \, {\rm d}x  +\int_{0}^t \int_{\Omega}  f(s,x) \, {\rm d}x   {\rm d}s, \quad t \in [0,T].
		\end{align}

		{\rm (iii)} (Energy equation)
		$u$ satisfies the energy equation  in the sense that
		\begin{align}\label{sth.local.energy}
		E[u](t) + \int_0^t D[u](s) \, {\rm d}s = E[u_0]+
		\int_{0}^t \int_{\Omega}  f_x(s,x) u_x \, {\rm d}x   {\rm d}s,
		\end{align}
		for almost every $t \in [0, T]$, where \textit{energy functional} $E[u](t)$ and \textit{dissipation functional} $D[u](t)$
		are defined by
		$$E[u](t) = \frac{1}{2} \int_{\Omega} |u_x|^2 \, {\rm d}x ~~~\text{and}~~~D[u](t)=\int_{\Omega} u^{\alpha + 2} |u_{xxx}|^{\alpha+1} \, {\rm d}x.$$
\end{theorem}


\begin{remark}\label{global}
	{\rm    The proof of Theorem \ref{sth.local} shows that the local existence time $T$ depends only  on the positive lower bound of the initial data. 
		By restarting the system with the final state $ u(T) $ as the new initial data, the positive weak solution  $u$ to problem \eqref{1.1}  can be extended beyond time $ T $. 
		Since $u(T,x)>0$ in $\Omega$, the solution can be continued until a maximal time $ T^* > 0 $, when $ u(T^*, x) = 0 $ for some $ x \in \bar{\Omega} $. }
		%
\end{remark}

\subsection{Long-time behavior}
In this subsection, we first state our main results concerning the global existence and long-time behavior of weak solutions, and provide explicit two-sided estimates for the convergence rate.

\begin{theorem}\label{sth.long}
	Fix $\alpha > 0$. If $0<u_0(x)\in H^1(\Omega)$, $f(t,x)\in L^{\frac{\alpha + 1}{\alpha}}( (0, \infty) \times \Omega) \cap L^1( (0, \infty)\times  \Omega)$ satisfy
	\begin{align}\label{ff}
		\|u_{0,x}\|_2 < \left| \Omega\right|^{-\frac{1}{2}}\bar{u}_0  \text{~~~and~~~}
		\|f_x(t,x)\|_2  \le  \left| \Omega\right|^{-\frac{3}{2}}
		\int_{\Omega}  f(t,x) \, {\rm d}x,~~\forall t>0  .
	\end{align}
	Then  problem \eqref{1.1} possesses at least one global positive weak solution
	\begin{align*}
		&u \in C\left( [0,\infty); H^1(\Omega)\right)  \cap L^{\alpha+1 }\left( (0,\infty); W^{3,\alpha + 1}_{B}(\Omega) \right)  \text{~with~}\nonumber\\
		&u_t \in L^{\frac{\alpha + 1}{\alpha}}\left( (0,\infty); (W^{1,\alpha + 1}_{B}(\Omega))'\right) ,
	\end{align*}
	satisfying the boundary condition $u_x = 0$ on $\partial\Omega$ pointwise for almost every  $t \ge 0$.
	Moreover, this global solution has the following asymptotic behavior:

	{\rm (1)} In the shear-thinning case $1 < \alpha < \infty$, there exists a constant $C > 0$ such that for all $t >0$,
	\begin{align}\label{con2}
\frac{\|u_{0,x}\|_2}{\left[ 1+C\|u_{0,x}\|_2^{\alpha-1}t \right] ^{\frac{1}{\alpha-1}}}& \le 	\left\| u - \bar{u}_0 - \frac{\int_{0}^\infty \int_{\Omega}  f(s,x) \, {\rm d}x   {\rm d}s}{|\Omega|} \right\|_{H^1(\Omega)}\nonumber\\
	&\leq \frac{C M_0}{\left[ 1+CM_0^{\alpha-1}t \right] ^{\frac{1}{\alpha-1}}}
	+ C \int_{\frac{t}{2}}^\infty \int_{\Omega}  f(t,x) \, {\rm d}x {\rm d}s .
	\end{align}
	
	{\rm (2)}  In the shear-thickening case $0 < \alpha < 1$ or in the Newtonian case $\alpha = 1$, there exists a constant $C > 0$ such that  for all $t >0$,
		\begin{align}\label{con21}
		\|u_{0,x}\|_2e^{-C M_{0}^{\alpha-1} t}
		&\le \left\| u - \bar{u}_0 - \frac{\int_{0}^\infty \int_{\Omega}  f(s,x) \, {\rm d}x   {\rm d}s}{|\Omega|} \right\|_{H^1(\Omega)}\nonumber\\
		&\leq  CM_0e^{-CM_0^{\alpha-1}t}
		+ C \int_{\frac{t}{2}}^\infty  \int_{\Omega}  f(t,x) \, {\rm d}x {\rm d}s,
		\end{align}	
where $0<M_0:=\|u_{0,x}\|_2 + \int_0^\infty \|f_x(s)\|_2\, {\rm d}s <+\infty$.
\end{theorem}

\begin{remark}\label{u0}
{\rm  	
	Consider  $u_0(x)=A_1+B_1\cos(\frac{\pi x}{m_1})$, $f(t,x) = (1+t)^{-\beta}(A_2+B_2\cos(\frac{\pi x}{m_2}) )$  with $\beta>1$ or $f(t,x) = e^{-(\beta-1)t}(A_2+B_2\cos(\frac{\pi x}{m_2}) ) $  with $\beta>1$  in $\Omega  =[0,L]$.  If the parameters satisfy $k_i|B_i|\pi<\sqrt{2}A_i$ and $k_im_i=L$ for $i=1,2$, then all the assumptions of Theorem \ref{sth.long} are satisfied. }
\end{remark}

\begin{remark}\label{u000}
	{\rm 	Substituting the examples in Remark \ref{u0} into Theorem \ref{sth.long},  a straightforward calculation shows that

	(1) For $0<\alpha\le 1$, if  $f(t,x) = (1+t)^{-\beta}(A+B\cos(\frac{\pi x}{m}) ) ~(\beta>1)$, then
			$$	\|u_{0,x}\|_2e^{-C M_{0}^{\alpha-1} t}
			\le \left\| u - \bar{u}_0 - \frac{1}{\beta-1} \right\|_{H^1(\Omega)}
			\leq  CM_0e^{-CM_0^{\alpha-1}t}
			+ \frac{C}{\beta-1} \left(
			1+\frac{t}{2}\right) ^{-\beta+1}.$$

			If   $f(t,x) = e^{-(\beta-1)t}(A+B\cos(\frac{\pi x}{m}) ) ~(\beta>1)$,	then
			$$	\|u_{0,x}\|_2e^{-C M_{0}^{\alpha-1} t}
			\le \left\| u - \bar{u}_0 - \frac{1}{\beta-1} \right\|_{H^1(\Omega)}
			\leq  CM_0e^{-CM_0^{\alpha-1}t}
			+ Ce^{-\frac{\beta-1}{2}t}.$$

(2) For $\alpha> 1$, if   $f(t,x) = (1+t)^{-\beta}(A+B\cos(\frac{\pi x}{m}) ) ~(\beta>1)$, then
	$$\frac{\|u_{0,x}\|_2}{\left[ 1+C\|u_{0,x}\|_2^{\alpha-1}t \right] ^{\frac{1}{\alpha-1}}} \le 	\left\| u - \bar{u}_0 - \frac{1}{\beta-1} \right\|_{H^1(\Omega)}
	\leq \frac{C M_0}{\left[ 1+CM_0^{\alpha-1}t \right] ^{\frac{1}{\alpha-1}}}
	+\frac{C}{\beta-1} \left( 1+\frac{t}{2}\right) ^{-\beta+1}.$$
	
	If $f(t,x) = e^{-(\beta-1)t}(A+B\cos(\frac{\pi x}{m}) )~(\beta>1) $, then
	$$	\frac{\|u_{0,x}\|_2}{\left[ 1+C\|u_{0,x}\|_2^{\alpha-1}t \right] ^{\frac{1}{\alpha-1}}} \le \left\| u - \bar{u}_0 - \frac{1}{\beta-1} \right\|_{H^1(\Omega)}
	\leq \frac{C M_0}{\left[ 1+CM_0^{\alpha-1}t \right] ^{\frac{1}{\alpha-1}}}
	+  C e^{-\frac{\beta-1}{2}t}.$$

From the above results, it can be seen that for $0<\alpha<1$, if the time-and space-dependent force $f(t,x)$ decays exponentially in time, then the solution converges exponentially to   $\bar{u}_0 + \frac{1}{\beta-1}$;
%
if $f(t,x)$ decays only polynomially, the convergence remains polynomial, as opposed to exponentially. 
For $\alpha>1$, even if $f(t,x)$ decays exponentially in time, the solution still converges polynomially to   $\bar{u}_0 + \frac{1}{\beta-1}$.
%
	These demonstrate that the forcing term has a significant impact on the convergence rate of the solution.

}
\end{remark}




%
To prove Theorem \ref{sth.long}, we give several key lemmas. First, we present a positive lower bound for the solution to problem  \eqref{1.1}.
	\begin{lemma}\label{lower.u}
		If all the conditions of Theorem \ref{sth.long} are satisfied,  then  there exists a constant $m:=\bar{u}_0 - \left| \Omega\right|^{\frac{1}{2}} \|u_{0,x}\|_2>0$  such that
		$$u(t,x)\geq  m,~~\forall t\ge 0.$$
	\end{lemma}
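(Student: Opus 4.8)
The plan is to reduce the pointwise lower bound to two quantitative estimates: one from below for the spatial average $\bar u(t) := |\Omega|^{-1}\int_\Omega u(t,x)\,{\rm d}x$, and one from above for the seminorm $\|u_x(t)\|_2$. Since $u \in C([0,T];H^1(\Omega))$ and $H^1(\Omega)\hookrightarrow C(\bar\Omega)$ in one space dimension, for each fixed $t$ the continuous function $u(t,\cdot)$ attains its average at some point $x_0\in\bar\Omega$, and the fundamental theorem of calculus combined with Cauchy--Schwarz gives the elementary bound
$$
u(t,x)\ \ge\ \bar u(t) - |\Omega|^{\frac12}\,\|u_x(t)\|_2,\qquad (t,x)\in[0,T]\times\bar\Omega .
$$
It therefore suffices to estimate $\bar u(t)$ from below and $\|u_x(t)\|_2$ from above.

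For the average, the mass equation \eqref{sth.local.mass} gives directly $\bar u(t)=\bar u_0 + |\Omega|^{-1}\int_0^t\int_\Omega f(s,x)\,{\rm d}x\,{\rm d}s$. For the seminorm, I would start from the energy equation \eqref{sth.local.energy}, discard the nonnegative dissipation term $\int_0^t D[u]\,{\rm d}s$, and estimate the forcing term by $\int_0^t\|f_x(s)\|_2\|u_x(s)\|_2\,{\rm d}s$ via Cauchy--Schwarz. Writing $y(t):=\|u_x(t)\|_2=\sqrt{2E[u](t)}$, this yields the quadratic integral inequality $\tfrac12 y(t)^2\le \tfrac12 y(0)^2+\int_0^t\|f_x(s)\|_2\,y(s)\,{\rm d}s$, which is of standard Gronwall--Bihari type. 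Differentiating the associated majorant shows that its square root grows at rate at most $\|f_x(t)\|_2$, and hence
$$
\|u_x(t)\|_2\ \le\ \|u_{0,x}\|_2 + \int_0^t \|f_x(s)\|_2\,{\rm d}s .
$$

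Finally I would combine the two estimates and invoke the structural hypothesis $\|f_x(s)\|_2\le|\Omega|^{-3/2}\int_\Omega f(s,x)\,{\rm d}x$ of Theorem \ref{sth.long}. Multiplying the seminorm bound by $|\Omega|^{1/2}$ and inserting this hypothesis gives $|\Omega|^{\frac12}\int_0^t\|f_x(s)\|_2\,{\rm d}s\le |\Omega|^{-1}\int_0^t\int_\Omega f\,{\rm d}x\,{\rm d}s$, so that the two forcing contributions cancel:
$$
u(t,x)\ \ge\ \bar u_0 + \tfrac{1}{|\Omega|}\int_0^t\!\int_\Omega f\,{\rm d}x\,{\rm d}s - |\Omega|^{\frac12}\|u_{0,x}\|_2 - |\Omega|^{\frac12}\int_0^t\|f_x\|_2\,{\rm d}s \ \ge\ \bar u_0 - |\Omega|^{\frac12}\|u_{0,x}\|_2 = m,
$$
which is strictly positive by the first smallness assumption on $u_0$. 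The main subtlety is exactly this cancellation: the Gronwall step unavoidably produces an additive $\int_0^t\|f_x\|_2$ fluctuation term, and it is the structural condition tying $\|f_x\|_2$ to the mass source $\int_\Omega f\,{\rm d}x$ that guarantees the growth of the average dominates the growth of the fluctuation, leaving the clean time-independent constant $m$. A minor technical point is that the energy equation only holds for almost every $t$, so the seminorm bound is first established a.e.\ and then extended to all $t\in[0,T]$ using the continuity of $t\mapsto u(t,\cdot)$ in $H^1(\Omega)$.
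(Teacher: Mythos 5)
Your proof is correct and follows essentially the same route as the paper: the seminorm bound $\|u_x(t)\|_2\le\|u_{0,x}\|_2+\int_0^t\|f_x(s)\|_2\,{\rm d}s$ from the energy identity, the pointwise bound $|u-\bar u(t)|\le|\Omega|^{1/2}\|u_x\|_2$ via mass conservation and the fundamental theorem of calculus, and the cancellation of the forcing contributions through the hypothesis $\|f_x(t)\|_2\le|\Omega|^{-3/2}\int_\Omega f(t,x)\,{\rm d}x$. The only (cosmetic) difference is that you obtain the seminorm bound from the integral form of the energy inequality via a Gronwall--Bihari argument, whereas the paper divides the differential inequality $\tfrac12\tfrac{\rm d}{{\rm d}t}\|u_x\|_2^2\le\|f_x\|_2\|u_x\|_2$ by $\|u_x\|_2$; your version is if anything slightly more careful at points where $\|u_x\|_2$ vanishes.
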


\begin{proof}
First,  from \eqref{sth.local.energy} and H${\rm \ddot{o}}$lder's inequality,  we obtain
\begin{align}\label{ddt1}
\frac{1}{2}\frac{\rm d}{{\rm d}t} \|u_x\|_2^2 + \int_{\Omega} u^{\alpha + 2} |u_{xxx}|^{\alpha+1} \, {\rm d}x
=\int_{\Omega} f_x(t,x)u_{x} \, {\rm d}x
\le \|f_x(t,x)\|_2 \|u_{x}\|_2,~~~\forall t\in[0,T],
\end{align}
which implies
\begin{align}\label{ux}
\|u_x\|_2 \le \|u_{0,x}\|_2 + \int_0^t  \|f_x(s,x)\|_2 \,{\rm d}s,~~~\forall t\in[0,T],
\end{align}
where $T$ is the existence time of the solution in Theorem \ref{sth.local}.

In  addition, according to \eqref{sth.local.mass}, we get
\begin{align}\label{sth.local.mass1}
\int_{\Omega} \left(u- \bar{u}_0 -  \frac{1}{|\Omega|} \int_{0}^t \int_{\Omega}  f(s,x) \, {\rm d}x  {\rm d}s \right)   \,{\rm d}x =0,~~~\forall t\in[0,T].
\end{align}
Then by \eqref{sth.local.mass1}, the fundamental theorem of calculus and  Poincar\'e's  inequality, we find
\begin{align}\label{u-u}
\left|   u - \bar{u}_0 - \frac{1}{|\Omega|} \int_{0}^t \int_{\Omega}  f(s,x) \, {\rm d}x  {\rm d}s  \right|
\le \left| \Omega\right|^{\frac{1}{2}}  \| u_x \|_{2},
\end{align}
and
\begin{align}\label{u-u1}
\int\limits_{\Omega} \left(u - \bar{u}_0 - \frac{1}{|\Omega|} \int_{0}^t \int_{\Omega}  f(s,x) \, {\rm d}x  {\rm d}s  \right)^{2} {\rm d}x
\leq \left( \frac{|\Omega|}{\pi}\right) ^2\|u_x\|_2^2,~~~\forall t\in[0,T].
\end{align}
Combining \eqref{ux} with \eqref{u-u}, we get
\begin{align}\label{123}
\left|  u - \bar{u}_0 - \frac{1}{|\Omega|} \int_{0}^t \int_{\Omega}  f(s,x) \, {\rm d}x  {\rm d}s \right|
\leq \left| \Omega\right|^{\frac{1}{2}}  \left(\|u_{0,x}\|_2 + \int_0^t  \|f_x(s,x)\|_2 \,{\rm d}s \right),~~~\forall t\in[0,T].
\end{align}
Since $\|u_{0,x}\|_2 < \left| \Omega\right|^{-\frac{1}{2}}\bar{u}_0 $ and $ \|f_x(t,x)\|_2 \le  \left| \Omega\right|^{-\frac{3}{2}}
\int_{\Omega}  f(t,x) \, {\rm d}x $ by assumptions,  from \eqref{123}, we deduce  
\begin{align}\label{po1}
u(t,x)\ge  m ,~~~\forall t\in[0,T].
\end{align}
Therefore, solution $ u $ to \eqref{1.1} on $ [0, T] $ remain strictly bounded away from zero. By bootstrapping as in Remark \ref{global},
this solution can be extended globally for all $t \ge 0$, yielding a positive weak solution
$  u \in C([0, \infty); H^1(\Omega)) \cap L^{\alpha + 1}((0, \infty); W^{3,\alpha + 1}_{B}(\Omega))$
that satisfies
$$u(t,x)\geq  m >0,~~\forall t\in[0,\infty).$$
	This concludes the proof.  	
\end{proof}

\begin{remark}
{\rm 
As revealed by Lemma \ref{lower.u}, the conditions $\|u_{0,x}\|_2 < \left| \Omega\right|^{-\frac{1}{2}}\bar{u}_0 $ and  $ \|f_x(t,x)\|_2 \le  \left| \Omega\right|^{-\frac{3}{2}}
		\int_{\Omega}  f(t,x) \, {\rm d}x $ in Theorem \ref{sth.long} serve to guarantee that the solution is  bounded from below by a positive universal constant.}
\end{remark}


Based on the estimate of lower bound for the solution, we can now derive the following differential inequality for the energy functional $E[u]$,  which plays a crucial role in studying the long-time behavior of solutions to \eqref{1.1}.


\begin{lemma}\label{dissipation}
For $\alpha > 0$ and $u_0 \in H^1(\Omega)$ satisfying  $u_0(x) > 0$ for all $x \in \bar{\Omega}$. Let $u$ be a weak solution to \eqref{1.1}. 
Then for almost every $t \ge 0$, the following inequality holds
	\begin{align}\label{dissipation.11}
	\frac{\rm d}{{\rm d}t} E[u](t) + m_1\left( E[u](t)\right) ^{\frac{\alpha+1}{2}}
	\le \sqrt{2} \|f_x(t,x)\|_2\left(  E[u](t)\right)^{\frac{1}{2}} ,  
	\end{align}
		where  $$m_1:=2^{\frac{\alpha+1}{2}}|\Omega|^{-\frac{5\alpha + 3}{2}} m^{\alpha + 2}.$$
\end{lemma}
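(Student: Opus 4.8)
The plan is to start from the energy identity behind \eqref{sth.local.energy}, namely the pointwise-in-time relation \eqref{ddt},
\[
\frac{\mathrm d}{\mathrm dt}E[u](t) = -D[u](t) + \int_\Omega f_x(t,x)\,u_x\,\mathrm dx,
\]
and to convert it into the stated differential inequality by (i) bounding the forcing term from above and (ii) bounding the dissipation $D[u]$ from below by a power of $E[u]$. The forcing term is immediate: by the Cauchy--Schwarz inequality together with $\|u_x\|_2=\sqrt2\,(E[u])^{1/2}$,
\[
\int_\Omega f_x u_x\,\mathrm dx \le \|f_x(t,x)\|_2\,\|u_x\|_2 = \sqrt2\,\|f_x(t,x)\|_2\,(E[u](t))^{1/2},
\]
which is exactly the right-hand side of \eqref{dissipation.11}. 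Thus the whole lemma reduces to the coercivity estimate $D[u](t)\ge m_1\,(E[u](t))^{(\alpha+1)/2}$.

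For this lower bound I would first invoke the uniform positivity $u(t,x)\ge m>0$ from Lemma \ref{lower.u}, which gives
\[
D[u](t)=\int_\Omega u^{\alpha+2}|u_{xxx}|^{\alpha+1}\,\mathrm dx \ge m^{\alpha+2}\int_\Omega |u_{xxx}|^{\alpha+1}\,\mathrm dx = m^{\alpha+2}\,\|u_{xxx}\|_{\alpha+1}^{\alpha+1}.
\]
Comparing with the target constant $m_1=2^{(\alpha+1)/2}|\Omega|^{-(5\alpha+3)/2}m^{\alpha+2}$ and using $(E[u])^{(\alpha+1)/2}=2^{-(\alpha+1)/2}\|u_x\|_2^{\alpha+1}$, the claim becomes equivalent to the interpolation inequality
\[
\|u_x\|_2 \le |\Omega|^{\frac{5\alpha+3}{2(\alpha+1)}}\,\|u_{xxx}\|_{\alpha+1},
\]
so the entire argument rests on proving this with the stated exponent, uniformly for $\alpha>0$.

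I would establish the interpolation inequality through the boundary conditions and the fundamental theorem of calculus, rather than through $L^2$-Poincar\'e inequalities. Since $u_x=0$ on $\partial\Omega$, integration gives $\int_\Omega u_{xx}\,\mathrm dx=0$, so $u_{xx}$ vanishes at some $x_0\in\Omega$; writing $u_{xx}(x)=\int_{x_0}^x u_{xxx}\,\mathrm dy$ and applying H\"older's inequality yields $\|u_{xx}\|_\infty\le\|u_{xxx}\|_1\le|\Omega|^{\alpha/(\alpha+1)}\|u_{xxx}\|_{\alpha+1}$. Using $u_x(x)=\int_0^x u_{xx}\,\mathrm dy$ (again $u_x=0$ on $\partial\Omega$) gives $\|u_x\|_\infty\le|\Omega|\,\|u_{xx}\|_\infty$, and finally $\|u_x\|_2\le|\Omega|^{1/2}\|u_x\|_\infty$. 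Multiplying the three bounds produces the exponent $\tfrac12+1+\tfrac{\alpha}{\alpha+1}=\tfrac{5\alpha+3}{2(\alpha+1)}$, exactly as required; raising to the power $\alpha+1$ and reinstating $m^{\alpha+2}$ and the factor $2^{(\alpha+1)/2}$ recovers precisely $m_1$.

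The main obstacle is obtaining this interpolation inequality uniformly in $\alpha$, and in particular in the shear-thickening range $0<\alpha<1$. The naive route --- apply the Poincar\'e inequalities $\|u_x\|_2\le(|\Omega|/\pi)^2\|u_{xxx}\|_2$ (legitimate here because $u_x=0$ on $\partial\Omega$ and $u_{xx}$ is mean-zero) and then pass from $L^2$ to $L^{\alpha+1}$ --- breaks down when $\alpha+1<2$, since H\"older on the bounded domain then controls $\|u_{xxx}\|_{\alpha+1}$ from \emph{above} by $\|u_{xxx}\|_2$, the wrong direction for a lower bound on $D[u]$. The $L^\infty$-through-the-fundamental-theorem-of-calculus argument sidesteps this entirely, works for every $\alpha>0$, and reproduces the exact constant; once it is in hand, combining the forcing bound, the coercivity bound, and the energy identity yields \eqref{dissipation.11} for almost every $t\in[0,T]$.
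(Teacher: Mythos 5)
Your proposal is correct and follows essentially the same route as the paper: bound the forcing term by Cauchy--Schwarz, reduce to the coercivity estimate $D[u]\ge m_1(E[u])^{(\alpha+1)/2}$ via the lower bound $u\ge m$ from Lemma \ref{lower.u}, and prove the interpolation inequality $\|u_x\|_2\le|\Omega|^{\frac{5\alpha+3}{2(\alpha+1)}}\|u_{xxx}\|_{\alpha+1}$ by the fundamental theorem of calculus from a zero of $u_{xx}$ together with H\"older, exactly as in \eqref{zzz3}--\eqref{dissipation.3}. If anything, your justification that $u_{xx}$ vanishes at some interior point (since $\int_\Omega u_{xx}\,{\rm d}x=0$) is slightly cleaner than the paper's choice of a point where $u_{xx}\le 0$.
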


\begin{proof}
	First, we claim that the following inequality holds
	\begin{align}\label{zzz3}
	|u_x| \leq \int_{\Omega} |u_{xx}| \, {\rm d}x \leq |\Omega| \int_{\Omega} |u_{xxx}| \, {\rm d}x .
	\end{align}
	Indeed,  by the fundamental theorem of calculus and boundary condition $u_x|_{\partial \Omega}=0$, then there exists $y_0\in \partial \Omega$ satisfying
	\begin{align}\label{zzz1}
		|u_x| = |u_x(x)-u_x(y_0)| = \left| \int_{y_0}^x u_{xx}(y) \, {\rm d}y \right|
		\le \int_{\Omega} |u_{xx}| \, {\rm d}x,~~~\forall x\in\Omega.
	\end{align}
	Next, observe that there exists  $z_0\in \Omega$ satisfying $u_{xx}(z_0)\le 0.$
	If not, then for all $z_0\in \Omega,$ we have $u_{xx}(z_0)> 0, $  which contradicts the boundary condition  $u_x|_{\partial \Omega}=0$.
Then, we deduce
\begin{align}\label{zzz2}
	|u_{xx}|\le  |u_{xx}(x)-u_{xx}(z_0)|= \left| \int_{z_0}^{x} u_{xxx}(z)\,{\rm d}z\right|  \le \int_\Omega |u_{xxx}| \, {\rm d}x,~~~\forall x\in\Omega.
\end{align}
Substituting \eqref{zzz2} into \eqref{zzz1} yields \eqref{zzz3}.	
	Then applying  H${\rm \ddot{o}}$lder's inequality and the definition of $E[u](t)$,  \eqref{zzz3} implies
	\begin{align}\label{dissipation.2}
		E[u](t) \le \frac{1}{2}  |\Omega|^{\frac{5\alpha + 3}{\alpha + 1}}\|u_{xxx}\|_{\alpha + 1}^2.
	\end{align}	
	Subsequently, Lemma \ref{lower.u} and \eqref{dissipation.2} yields
	\begin{align}\label{dissipation.3}
		D[u](t) = \int_{\Omega}  |u|^{\alpha + 2} |u_{xxx}|^{\alpha + 1} \, {\rm d}x
		\geq m^{\alpha+2}  \|u_{xxx}\|^{\alpha + 1}_{\alpha + 1}
		\geq  m_1 \left( E[u](t)\right) ^{\frac{\alpha + 1}{2}},
	\end{align}
	where $m_1= 2^{\frac{\alpha+1}{2}}|\Omega|^{-\frac{5\alpha + 3}{2}} m^{\alpha + 2}$ and $m$ is gived by Lemma \ref{lower.u}.
Next, taking into account \eqref{dissipation.3} into \eqref{sth.local.energy} and using H${\rm \ddot{o}}$lder's  inequality, we further obtain
 \begin{align*}
	\frac{\rm d}{{\rm d}t} E[u](t)
	&= -D[u](t) + \int_{\Omega} f_x(t,x)u_{x} \, {\rm d}x  \leq -m_1\left( E[u](t)\right) ^{\frac{\alpha+1}{2}}
	+ \|f_x(t,x)\|_2 \|u_{x}\|_2\nonumber\\
	& =-m_1\left( E[u](t)\right) ^{\frac{\alpha+1}{2}}
	+ \sqrt{2}  \|f_x(t,x)\|_2 \left(E[u](t) \right) ^{\frac{1}{2}} ,  	
	\end{align*}	
	for almost every $t \ge 0$.
	This concludes the proof.  	
\end{proof}


~

{\bf Proof of Theorem \ref{sth.long}.}
First, by Lemma \ref{dissipation}, and multiplying both sides of \eqref{dissipation.11} by $\left(E[u](t) \right) ^{-\frac{1}{2}}$, we deduce
\begin{align}\label{dissipation.zz}
	\frac{\rm d}{{\rm d}t}\|u_x\|_2 + m_1 2^{-\frac{\alpha+1}{2}}\|u_x\|_2 ^{\alpha}
	\le \|f_x(t,x)\|_2 ,~~~\forall t > 0.
\end{align}
Below we analyze the long-time behavior of solutions, considering two cases.

{\bf Case 1.}  If  $\alpha>1,$  then  from \eqref{dissipation.zz} and Lemma \ref{y1}, we have
\begin{align}\label{E.1}
		\|u_{0,x}\|_2\big[1+&(\alpha-1)m_1 2^{-\frac{\alpha+1}{2}} \|u_{0,x}\|_2^{\alpha-1}t\big]^{\frac{1}{1-\alpha}} \le  \|u_x\|_2\nonumber\\
	&\le  M_0\left[1+2^{-\frac{\alpha+3}{2}}m_1(\alpha-1) M_0^{\alpha-1}t \right] ^{\frac{1}{1-\alpha}}
	+ \int_{\frac{t}{2}}^t \|f_x(s,x)\|_2\, {\rm d}s,~~~\forall t>0.
\end{align}
Provided that
\begin{align}\label{M0}
	0<M_0:=\|u_{0,x}\|_2 + \int_0^\infty \|f_x(s,x)\|_2\, {\rm d}s <+\infty.
\end{align}
Thus, from \eqref{u-u1} and \eqref{E.1}, we have

\begin{align}\label{H1.1}
\|u_x\|_2  
\le 	&\left\| u - \bar{u}_0 - \frac{1}{|\Omega|} \int_{0}^\infty \int_{\Omega}  f(s,x) \, {\rm d}x  {\rm d}s\right\|_{H^1(\Omega)} \nonumber\\
\le & \left\| u - \bar{u}_0 - \frac{1}{|\Omega|} \int_{0}^t \int_{\Omega}  f(s,x) \, {\rm d}x  {\rm d}s\right\|_{H^1(\Omega)} +\left\|\frac{1}{|\Omega|} \int_{t}^\infty \int_{\Omega}  f(s,x) \, {\rm d}x  {\rm d}s  \right\|_{H^1(\Omega)}\notag\\
	\le &\left\| u - \bar{u}_0 - \frac{1}{|\Omega|} \int_{0}^t \int_{\Omega}  f(s,x) \, {\rm d}x  {\rm d}s  \right\|_2
	+ \|u_x\|_2 +  \left\|\frac{1}{|\Omega|} \int_{t}^\infty \int_{\Omega}  f(s,x) \, {\rm d}x  {\rm d}s  \right\|_2 \notag\\
	\leq & \; C_1 M_0\left[1+2^{-\frac{\alpha+3}{2}}m_1(\alpha-1) M_0^{\alpha-1}t \right] ^{\frac{1}{1-\alpha}}
	+ C_1 \int_{\frac{t}{2}}^t \|f_x(s,x)\|_2\, {\rm d}s \notag\\
	&+ |\Omega|^{-\frac{1}{2}}\int_{t}^\infty \int_{\Omega}  f(s,x) \, {\rm d}x  {\rm d}s ,  
\end{align}
where $C_1= \frac{|\Omega|}{\pi}+1$. Combining \eqref{E.1} and \eqref{H1.1} yields \eqref{con2}.

{\bf Case 2.}  If $0<\alpha\le 1,$  then from \eqref{dissipation.zz} and Lemma \ref{y1}, we obtain
\begin{align}\label{E.2}
\|u_{0,x}\|_2&e^{-m_1 2^{-\frac{\alpha+1}{2}} M_{0}^{\alpha-1} t}
\le  \|u_x\|_2\nonumber\\
&\le M_0 e^{-2^{-\frac{\alpha+3}{2}}m_1M_0^{\alpha-1}t}
+ \int_{\frac{t}{2}}^t  e^{-2^{-\frac{\alpha+3}{2}}m_1M_0^{\alpha-1}(t-s)}\|f_x(s,x)\|_2\, {\rm d}s,~~~\forall t>0.
\end{align}
Provided that
$$0<M_0:=\|u_{0,x}\|_2 + \int_0^\infty \|f_x(s,x)\|_2\, {\rm d}s <+\infty.$$
Thus, similar to estimate  \eqref{H1.1}, we get \eqref{con21}. 
The proof of Theorem \ref{sth.long} is complete.
\(\square\)

As a byproduct, we obtain the following local $L^1$-in-time decay estimate for the dissipation functional.
\begin{theorem}\label{D}
		If all the conditions of Theorem \ref{sth.long} are satisfied, and $u$ be a positive weak solution. Then there exists a constant $C>0$, independent of $t$, such that the dissipation functional $D[u]$ satisfies the following estimate 
	\begin{align}
	\int_{\frac{t}{2}}^{t}D[u](s){\rm d} s\leq \frac{C}{t}\int_{\frac{t}{4}}^{\frac{t}{2}}E[u](s){\rm d} s+C\left( \int_{\frac{t}{4}}^{t} \|f_x(s,x)\|_2^2 {\rm d} s\right)^{\frac{1}{2}}\left( \int_{\frac{t}{4}}^{t}E[u](s) {\rm d} s\right)^{\frac{1}{2}}.
	\end{align}
	More specifically, 
	
	{\rm (1)} In the shear-thinning case $1 < \alpha < \infty$, the following inequality holds
	\begin{align}
	\int_{\frac{t}{2}}^{t}D[u](s){\rm d} s
	\leq & \; CM_0^2\left(1+ CM_0^{\alpha-1}t\right)^{\frac{2}{1-\alpha}} 
	+ C\left( \int_{\frac{t}{2}}^t \|f_x(s,x)\|_2\, {\rm d}s\right)^2 \nonumber\\
	&+ CM_0\left(1+ CM_0^{\alpha-1}t\right)^{\frac{3-\alpha}{1-\alpha}} \left( \int_{\frac{t}{4}}^t \|f_x(s,x)\|_2^2\, {\rm d}s\right)^{\frac{1}{2}}\nonumber\\
	&	+  C\left( \int_{\frac{t}{4}}^t \|f_x(s,x)\|_2^2\, {\rm d}s\right)^{\frac{1}{2}} \left( \int_{\frac{t}{4}}^t\left( \int_{\frac{s}{2}}^s \|f_x(\tau,x)\|_2\, {\rm d}\tau\right)^2 \, {\rm d}s\right)^{\frac{1}{2}}.
	\end{align}
	
	{\rm (2)}  In the shear-thickening case $0 < \alpha < 1$ or in the Newtonian case $\alpha = 1$, the following inequality holds
	\begin{align}
	\int_{\frac{t}{2}}^{t}D[u](s){\rm d} s
	\leq & \; CM_0^2e^{-CM_0^{\alpha-1}t}
	+ C\left( \int_{\frac{t}{2}}^t \|f_x(s,x)\|_2 e^{-CM_0^{\alpha-1}(t-s)}\, {\rm d}s\right)^2 \nonumber\\
	&+ CM_0e^{-CM_0^{\alpha-1}t} \left( \int_{\frac{t}{4}}^t \|f_x(s,x)\|_2^2\, {\rm d}s\right)^{\frac{1}{2}}	\nonumber\\
	&+  C\left( \int_{\frac{t}{4}}^t \|f_x(s,x)\|_2^2\, {\rm d}s\right)^{\frac{1}{2}} \left( \int_{\frac{t}{4}}^t\left( \int_{\frac{s}{2}}^s \|f_x(\tau,x)\|_2 e^{-CM_0^{\alpha-1}(s-\tau)}\, {\rm d}\tau\right)^2  \, {\rm d}s \right)^{\frac{1}{2}},
	\end{align}	
	where $0<M_0:=\|u_{0,x}\|_2 + \int_0^\infty \|f_x(s,x)\|_2\, {\rm d}s <+\infty.$
\end{theorem}

\begin{proof} We choose a cut-off function $\eta(s)\in C^{\infty}(\mathbb{R})$ in time such that
	$0\leq \eta\leq1,\eta(s)=1$ for $s\geq\frac{t}{2}$, $\eta(s)=0$ for $s\leq\frac{t}{4}$ and $\eta'(s)\leq\frac{C}{t}$ for some constant $C>0.$
	Now we choose $\eta u_{xx}$ as a test-function and obtain
	\begin{align}\label{add01}
	\int_{0}^{t}\int_{\Omega}\eta(s)u_{s}u_{xx}{\rm d} x{\rm d} s-\int_{0}^{t} \int_{\Omega} \eta(s)u^{\alpha + 2} |u_{xxx}|^{\alpha+1}{\rm d} x{\rm d} s
	=-\int_{0}^{t}\int_{\Omega} \eta(s)f_x(s,x)u_{x}{\rm d} x{\rm d} s.
	\end{align}
	Notice that $\eta(0)=0,\eta(t)=1$, we have
	\begin{align}\label{add02}
	0\leq E[u](t)=\int_{0}^{t}\frac{{\rm d} }{{\rm d} s}(\eta(s)E[u](s)){\rm d} s
	=\int_{0}^{t}\eta'(s)E[u](s){\rm d} s-\int_{0}^{t}\int_{\Omega}\eta(s)u_{s}u_{xx}{\rm d} x{\rm d} s.
	\end{align}		
	Combining \eqref{add01} with \eqref{add02}, we deduce
	\begin{align}\label{add03}
	\int_{\frac{t}{2}}^{t}D[u](s){\rm d} s&\leq \int_{0}^{t}\eta(s)D[u](s){\rm d} s
	\nonumber\\ &\leq\int_{0}^{t}\eta'(s)E[u](s){\rm d} s+\int_{0}^{t}\int_{\Omega} \eta(s)f_x(s,x)u_{x}{\rm d} x{\rm d} s
	\nonumber\\ 
	&\leq \frac{C}{t}\int_{\frac{t}{4}}^{\frac{t}{2}}E[u](s){\rm d} s+C\left( \int_{\frac{t}{4}}^{t} \|f_x(s,x)\|_2^2 {\rm d} s\right)^{\frac{1}{2}}\left( \int_{\frac{t}{4}}^{t}E[u](s) {\rm d} s\right)^{\frac{1}{2}}.
	\end{align}
	Furthermore, substituting \eqref {E.1} and \eqref {E.2} into \eqref {add03}, we complete the proof of Theorem \ref {D}.
\end{proof}

\begin{remark}
	{\rm Through analogous computations as in Remark \ref{u000}, we also observe that 
		 the significant influence of the forcing terms on the decay rate of the dissipation functional in the $L^1$-in-time sense.
	}
\end{remark}

\section{Constant forces}

In this section, the forcing term is a fixed time-and space-independent force $f_0$, we
investigate the following problem:
\begin{equation}\label{1.9}
\begin{cases}
u_t + \left(u^{\alpha+2} |u_{xxx}|^{\alpha-1}u_{xxx} \right)_x = f_0,~~&(t,x) \in(0,\infty) \times\Omega,\\
u_x(t,x) =u_{xxx} (t,x) = 0,~~&(t,x) \in (0,\infty) \times \partial\Omega,\\
u(0,x)=u_{0}(x),~~&x \in \Omega.
\end{cases}
\end{equation}

The local existence of weak solutions to problem \eqref{1.9} follow analogously to Theorem \ref{sth.local}, and thus we omit the details here.
%
%
Here, we focus on the case $f_0 \ge 0$, since for $f_0 < 0$, the mass equation \eqref{sth.local.mass} yields
$$u(t, x) \rightarrow 0 \text{ as } t \rightarrow T^* := -\frac{\bar{u}_0}{f_0},  $$
indicating that the thin-film will completely dry out over the finite time  $T^*$.

\begin{theorem}\label{f0.long}
	
	Fix $\alpha > 0$. If $0<u_0(x) \in H^1(\Omega)$ and $f_0\ge 0$ satisfy
	
	{\rm (i)}  $\|u_{0,x}\|_2<\left| \Omega\right|^{-\frac{1}{2}}\bar{u}_0$, \\
	or			
	{\rm (ii)} $\left| \Omega\right|^{-\frac{1}{2}}\bar{u}_0\le \|u_{0,x}\|_2<\left| \Omega\right|^{-\frac{1}{2}}(u_0(x)+\bar{u}_0-\varepsilon)$ with $\varepsilon \in (\bar{u}_0-\left| \Omega\right|^{\frac{1}{2}} \|u_{0,x}\|_2,   u_0(x)+\bar{u}_0-\left| \Omega\right|^{\frac{1}{2}} \|u_{0,x}\|_2)$, and there exists $\eta > 0$ such that $f_0\ge \eta.$\\
	Then problem \eqref{1.9} possesses at least one global positive weak solution $u$ that satisfies the  following asymptotic behavior:	
	
	{\rm (1)}  In the shear-thickening case $0 < \alpha < 1$,  there exists a positive but finite time $0 < t^* < \infty$ such that
	\begin{align*}
		u(t,x) \rightarrow \bar{u}_0 +t^*f_0 \text{ ~in~ } H^1(\Omega), \text{~as } t \rightarrow t^*,
		\text{ ~and~ } u(t,x) = \bar{u}_0 +tf_0 , \quad t \geq t^*, \, x \in \Omega.
	\end{align*}
	
	{\rm (2)} In the shear-thinning case $1 < \alpha < \infty$, there exists a constant $C > 0$ such that
\begin{align*}
	\| u - \bar{u}_0 -tf_0 \|_{H^1(\Omega)}
	\le \frac{C\|u_{0,x}\|_2}
	{\left[1	+C\|u_{0,x}\|_2^{\alpha-1}h^{\alpha+2}t
		\right] ^{\frac{1}{\alpha-1}}}, ~~~\forall t >0.
	\end{align*}

	{\rm (3)} In the Newtonian case $\alpha = 1$, there exist positive constants $C > 0$ such that
	\begin{align*}
	\| u - \bar{u}_0 -tf_0 \|_{H^1(\Omega)}
	\le C \|u_{0,x}\|_2e^{-Ch^{3}t}, ~~~\forall t >0,
	\end{align*}
		where  $m=  \bar{u}_0 - \left| \Omega\right|^{\frac{1}{2}} \|u_{0,x}\|_2$,
		\begin{align*}
		h =\begin{cases}
		m>0,&\text{~if~} \|u_{0,x}\|_2<\left| \Omega\right|^{-\frac{1}{2}}\bar{u}_0,\\
		\frac{-m+\varepsilon}{2}>0,&\text{~if~} \left| \Omega\right|^{-\frac{1}{2}}\bar{u}_0\le \|u_{0,x}\|_2<\left| \Omega\right|^{-\frac{1}{2}}(u_0(x)+\bar{u}_0-\varepsilon).
		\end{cases}
		\end{align*}
\end{theorem}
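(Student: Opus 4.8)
The plan is to exploit the single structural simplification that a constant force brings: since $f_0$ is independent of $x$ we have $f_x\equiv 0$, so the energy identity \eqref{sth.local.energy} collapses to the purely dissipative relation $E[u](t)+\int_0^t D[u](s)\,{\rm d}s=E[u_0]$. In particular $E[u]$ is non-increasing and $\|u_x(t)\|_2\le\|u_{0,x}\|_2$ for every $t$, which is precisely the homogeneous setting of \cite{JJCLKN}, now transported along a mean that drifts linearly in time: by the mass identity \eqref{sth.local.mass.x} (applied to the constant force) one has $\tfrac1{|\Omega|}\int_\Omega u(t,x)\,{\rm d}x=\bar u_0+tf_0=:u_\Omega(t)$. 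Thus the natural comparison object is $u-\bar u_0-tf_0$, whose $H^1$-norm I expect to control by $\|u_x\|_2$ alone.

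First I would secure a uniform positive lower bound $u(t,x)\ge h>0$. The fundamental-theorem-of-calculus/Poincar\'e estimate of the type \eqref{u-u1.1}, combined with $\|u_x(t)\|_2\le\|u_{0,x}\|_2$, yields $u(t,x)\ge \bar u_0+tf_0-|\Omega|^{1/2}\|u_{0,x}\|_2$. Under hypothesis {\rm (i)} the right-hand side is already bounded below by $h:=\bar u_0-|\Omega|^{1/2}\|u_{0,x}\|_2>0$ for all $t\ge0$, exactly as in Lemma \ref{lower.u}. Hypothesis {\rm (ii)} is the main obstacle, because there $\bar u_0-|\Omega|^{1/2}\|u_{0,x}\|_2$ may be non-positive, so positivity cannot be read off at $t=0$ and must instead be \emph{propagated}. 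Here I would run a maximal-time continuation argument: starting from the genuine initial positivity $\min_x u_0>0$ and the local lower bound $u\ge C_T>0$, I keep the energy dissipating as long as $u>0$, so the drifting mean $tf_0$ with $f_0\ge\eta>0$ forces the Poincar\'e lower bound to become and remain positive; the admissible range imposed on $\varepsilon$ is exactly what prevents the transient minimum from reaching zero, producing a uniform $h>0$.

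With $h>0$ fixed, the computation of Lemma \ref{dissipation.x} with $f_x=0$ gives the \emph{autonomous} inequality $\tfrac{\rm d}{{\rm d}t}E[u]+m_1\big(E[u]\big)^{(\alpha+1)/2}\le0$, with $m_1=2^{(\alpha+1)/2}|\Omega|^{-(5\alpha+3)/2}h^{\alpha+2}$; multiplying by $\big(E[u]\big)^{-1/2}$ as in \eqref{dissipation.zz} turns this into $\tfrac{\rm d}{{\rm d}t}\|u_x\|_2+m_12^{-(\alpha+1)/2}\|u_x\|_2^{\alpha}\le0$. Applying the comparison principle Lemma \ref{Lem01} with $\lambda=\alpha$ and zero right-hand side then produces all three regimes at once: exponential decay of $\|u_x\|_2$ for $\alpha=1$, polynomial decay $\|u_x\|_2\le\|u_{0,x}\|_2[1+c\,m_1\|u_{0,x}\|_2^{\alpha-1}t]^{-1/(\alpha-1)}$ for $\alpha>1$, and extinction at the finite time $t^{*}=\|u_{0,x}\|_2^{1-\alpha}/\big((1-\alpha)\,m_12^{-(\alpha+1)/2}\big)$ for $0<\alpha<1$. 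Passing from $\|u_x\|_2$ to the full norm via $\|u-u_\Omega\|_{H^1}\le C\|u_x\|_2$ (using \eqref{u-u1.1} with mean $\bar u_0+tf_0$) and substituting $m_1\sim h^{\alpha+2}$ then yields the stated estimates in cases {\rm (2)} and {\rm (3)}, with the explicit $\|u_{0,x}\|_2$-dependence coming from the initial datum of the comparison ODE.

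Finally, for the shear-thickening case {\rm (1)} I would argue that the finite-time extinction is genuine rather than merely a comparison bound: once $\|u_x(t^{*})\|_2=0$, i.e. $E[u](t^{*})=0$, the dissipative identity $E[u](t)+\int_{t^{*}}^tD[u]\,{\rm d}s=E[u](t^{*})=0$ together with $E,D\ge0$ forces $E[u](t)\equiv0$ for $t\ge t^{*}$, hence $u_x(t,\cdot)\equiv0$ and $u$ is spatially constant; the mass identity then pins it down to $u(t,x)=\bar u_0+tf_0$. The convergence $u\to\bar u_0+t^{*}f_0$ in $H^1$ as $t\to t^{*}$ follows from $\|u-\bar u_0-tf_0\|_{H^1}\le C\sqrt{E[u](t)}\to0$. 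Throughout, global existence is obtained by the same restart/bootstrapping scheme as in Remark \ref{global}, now using the uniform lower bound $h$ to continue the local solution indefinitely; the delicate point remains the propagation of positivity under hypothesis {\rm (ii)}, where—unlike the decaying forces of Sections 4--5—the non-decaying source must be used to push the minimum upward rather than being controlled by it.
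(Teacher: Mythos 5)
Your proposal is correct and follows essentially the same route as the paper: a uniform lower bound $u\ge h$ obtained from the Poincar\'e estimate under (i) and a continuation argument exploiting $f_0\ge\eta$ under (ii), followed by the autonomous differential inequality for the energy and the resulting finite-time extinction / polynomial / exponential trichotomy. The only cosmetic difference is that the paper introduces the shifted unknown $w=u-\bar u_0-tf_0$ and writes the homogeneous equation it satisfies before testing with $w_{xx}$, whereas you work directly with $E[u]$ and the observation that $f_x\equiv 0$ — since $w_x=u_x$, these are the same computation.
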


\begin{remark}
	{\rm  \cite[Theorem 1.1]{JJCLKN} gave the convergence of solutions for $f_0=0$. 
		Under low initial energy (i), we extend this result to any $f_0\geq 0$, obtaining the long-time behavior of solutions in  $H^1(\Omega)$. 
		Under high initial energy (ii), where the conditions of \cite{JJCLKN} fail, we still obtain the desired estimates by exploiting the compensating effect of $f_0$ on the positivity of the solution $u$.}
\end{remark}

\begin{remark}
	{\rm  Theorem \ref{f0.long} provides the quantitative blow-up rate for weak solutions to problem \eqref{1.9}.  
		Specifically,    $u$ approaches $\bar{u}_0 + tf_0$ in $H^1(\Omega)$ at at a polynomial rate as $t\to\infty$ for the  shear-thinning case $1 < \alpha < \infty$ and exponential rate for the Newtonian case $\alpha = 1$.
Particularly, for $0 < \alpha < 1$, $u$ coincides exactly with $\bar{u}_0 + tf_0$ within a finite time $0<t^*<\infty$.}
\end{remark}

{\bf Proof of Theorem \ref{f0.long}.}
Let
$$w(t, x) := u(t, x) - u_{\Omega}(t), $$
where
\begin{align}\label{uOmega}
	u_{\Omega}(t) := \frac{1}{|\Omega|} \int_\Omega u(t,x)\,{\rm d}x= \bar{u}_0  + tf_0.
\end{align}
Note that  $w(t, x)$  solves
\begin{equation}\label{con}
\begin{cases}
w_t + \left( u^{\alpha+2} |w_{xxx}|^{\alpha-1} w_{xxx}\right) _x = 0,~~&(t,x) \in (0,\infty) \times \Omega,\\
w_x =  w_{xxx} = 0,~~&(t,x) \in (0,\infty) \times \partial\Omega,\\
w(0,x)=w_{0}(x):=u_{0}(x)-\bar{u}_0,~~&x \in \Omega.
\end{cases}
\end{equation}
From \eqref{sth.local.mass}, it is   easy to know
\begin{align}\label{w}
\int_{\Omega} w(t, x) \,{\rm d}x = \int_{\Omega} w_0(x) \,{\rm d}x = 0.
\end{align}
According to \eqref{sth.local.energy}, we obtain
\begin{align}\label{con.equ}
\frac{1}{2} \frac{\rm d}{{\rm d}t} \int_{\Omega} \left| w_x\right| ^2  {\rm d}x + \int_{\Omega}u^{\alpha+2} |w_{xxx}|^{\alpha+1}   \, {\rm d}x = 0,~~~\forall t\in [0,T].
\end{align}
The non-negativity of the second term on the left-hand side of equation \eqref{con.equ} yields
$$\| w_x \|_2  \le \| w_{0,x} \|_2,~~~\forall t\in [0,T].  $$
Then by \eqref{uOmega}, the fundamental theorem of calculus and  Poincar\'e's  inequality, we know
\begin{align*}
\left|u- \bar{u}_0  - tf_0\right|
\le |\Omega|^{\frac{1}{2}} \|u_x \|_2
\le |\Omega|^{\frac{1}{2}} \| u_{0,x} \|_2,~~~\forall t\in [0,T],
\end{align*}
which implies
\begin{align}\label{low}
	u(t, x) \ge m +tf_0,~~~\forall t\in[0,T],
\end{align}
where $m=  \bar{u}_0 - \left| \Omega\right|^{\frac{1}{2}} \|u_{0,x}\|_2.$

Next, we claim that  for all $\varepsilon \in (\bar{u}_0-\left| \Omega\right|^{\frac{1}{2}} \|u_{0,x}\|_2,   u_0(x)+\bar{u}_0-\left| \Omega\right|^{\frac{1}{2}} \|u_{0,x}\|_2)$ and  $ t\in[0,T]$, solutions $ u $ to \eqref{1.9} on $ [0, T] $ remain strictly bounded away from zero:
\begin{align}\label{lower}
	u(t,x)&\ge  h
	  =\begin{cases}
	m>0,&\text{~if~} \|u_{0,x}\|_2<\left| \Omega\right|^{-\frac{1}{2}}\bar{u}_0 ,\\
	\frac{-m+\varepsilon}{2}>0,&\text{~if~} \left| \Omega\right|^{-\frac{1}{2}}\bar{u}_0\le \|u_{0,x}\|_2<\left| \Omega\right|^{-\frac{1}{2}}(u_0(x)+\bar{u}_0-\varepsilon).
	\end{cases}
\end{align}
Indeed, if $\|u_{0,x}\|_2<\left| \Omega\right|^{-\frac{1}{2}}\bar{u}_0,$  then  \eqref{lower}$_1$ follows directly from \eqref{low}.

If  $\left| \Omega\right|^{-\frac{1}{2}}\bar{u}_0\le \|u_{0,x}\|_2<\left| \Omega\right|^{-\frac{1}{2}}(u_0(x)+\bar{u}_0-\varepsilon)$, we have
$$u_0(x)>-m+\varepsilon>0.$$
We define the time
$$  \tau = \sup \{\tilde{T} \geq 0 ~|~ \exists \text{ a weak solution } u \text{ to } \eqref{1.9}  \text{ with }  u(t, x) \ge \frac{-m+\varepsilon}{2}, \, \forall\, 0 < t < \tilde{T} \},  $$
which is the maximum time up to which solution remains bounded away from zero. Note that by the continuity of weak solutions, we have $0 < \tau \leq T$ and
\begin{align}\label{low.1}
	u(t, x) \ge \frac{-m+\varepsilon}{2},~~~\forall t\in[0,\tau].
\end{align}
Moreover, from \eqref{low} we have
 \begin{align}\label{low.2}
 u(t, x) \ge m +tf_0\ge \frac{-m+\varepsilon}{2},~~~\forall t\in[T_0,T],
 \end{align}
 where $ T_0=\frac{-3m+\varepsilon}{2f_0}.$
Then we may choose $\eta > 0$ large enough so that we obtain $T_0 \le\tau$.
Combining with \eqref{low.1} and \eqref{low.2}, we can get \eqref{lower}.
Therefore,  by bootstrapping as in Remark \ref{global},
this solution $u$ can be extended globally,  and satisfies
$$u(t,x)\ge  h > 0,~~~\forall t\ge 0,$$
and similar estimates of \eqref{dissipation.2}, \eqref{dissipation.3} and from \eqref{lower}, we find
\begin{align}\label{con.inter}
\int_{\Omega}  u^{\alpha + 2} |w_{xxx}|^{\alpha + 1} \, {\rm d}x
\geq h^{\alpha+2} \|w_{xxx}\|^{\alpha + 1}_{\alpha + 1}
\geq  C_6 h^{\alpha+2}  \|w_x\|_2^{\alpha + 1},~~~\forall t \ge 0,
\end{align}
where $C_6:= |\Omega|^{-\frac{5\alpha + 3}{2}} $.
And then substituting \eqref{con.inter} into \eqref{con.equ} yields
\begin{align}\label{con.ineq}
\frac{\rm d}{{\rm d}t} \|w_x\|_2^2
+ 2C_6  h^{\alpha+2} \|w_x\|_2^{\alpha + 1}\le 0,~~~\forall t\ge 0.
\end{align}

{\bf Case 1.}  If $0<\alpha<1$.
Integrating \eqref{con.ineq} from $0$ to $t$ and obtain
\begin{align*}
	\|w_x\|_2^2
	\le \left[\|w_{0,x}\|_2^{1-\alpha}
	-(1-\alpha)C_6 h^{\alpha+2}t\right] ^{\frac{2}{1-\alpha}}, \text{~~if~} \|w_x\|^2_2>0,
\end{align*}
which implies the existence of a finite time $0\le t^*<\infty$ with
$$t^*\le \frac{\|w_{0,x}\|_2^{1-\alpha}}{(1-\alpha)C_6h^{\alpha+2}}$$
satisfying
$$\|w_x\|_2^2=0,~~~\forall t \ge t^*. $$
Note that $\|w_x\|_2^2=0$  for all $t \geq t^*$
 and \eqref{w} implies that $w(t, x) = 0$ for all $t \geq t^*$ and $x \in \Omega$.
Thus, we obtain that
\begin{align*}
u(t,x) \rightarrow \bar{u}_0 +t^*f_0 \text{ ~in~ } H^1(\Omega), \text{~as } t \rightarrow t^*,
\text{ ~and~ } u(t,x) = \bar{u}_0 +tf_0 , \quad t \geq t^*, \, x \in \Omega.
\end{align*}

{\bf Case 2.}  If $\alpha>1$, then we have
\begin{align}\label{xxx.1}
\|w_x\|_2^2
\le \left[\|w_{0,x}\|_2^{1-\alpha}
+(\alpha-1)C_6h^{\alpha+2}t
\right] ^{\frac{2}{1-\alpha}},  ~~~\forall t >0.
\end{align}
By \eqref{xxx.1} and Poincar\'e's  inequality, we deduce
\begin{align*}
\| u-\bar{u}_0 -tf_0 \|_{H^1(\Omega)}^2
\le \frac{C_4\|u_{0,x}\|_2^2}
{\left[1
	+ (\alpha-1)C_6\|u_{0,x}\|_2^{\alpha-1}h^{\alpha+2}t
	\right] ^{\frac{2}{\alpha-1}}},~~~\forall t >0,
\end{align*}
where $C_4=\left(\frac{|\Omega|}{\pi} \right) ^2+1$.

{\bf Case 3.}  If $\alpha=1$, then we get
\begin{align}\label{xxx.2}
	 \|w_x\|_2^2
	\le \|w_{0,x}\|_2^2e^{-2C_6h^{\alpha+2}t},  ~~~\forall t >0.
\end{align}
According to \eqref{xxx.2} and Poincar\'e's  inequality, we obtain
\begin{align*}
\|u-\bar{u}_0 -tf_0\|_{H^1(\Omega)}^2
\le C_4 \|u_{0,x}\|_2^2e^{-2C_6h^{3}t},~~~\forall t >0.
\end{align*}
Therefore, the proof of the Theorem \ref{f0.long} is complete.
\(\square\)


\section{Numerical Simulations}

This section verifies the long-time behavior of the solution to problem \eqref{1.1} and its approximation properties to  $\bar{u}_0 + \frac{1}{|\Omega|}\int_0^\infty \int_{\Omega} f(s,x){\rm d}x{\rm d}s$, with a focus on the decay of their error in the $H^1(\Omega)$-norm.
Employing a finite difference discretization scheme, we implement numerical experiments in Python. 
The results not only clearly simulate the asymptotic stability of the solution and the dynamic process of the decay of the error norm,  but also demonstrate the theoretical prediction of Theorem \ref{sth.long}.

\begin{example}\label{ex1}
	{\rm Let us consider the numerical solution to problem \eqref{1.1} on
		$$\alpha = 0.5,~\Omega=[0,200],~T = 20,$$
		under the initial data
	$$u_0(x)=3 + 0.01\cos(\frac{\pi x}{10}),$$
	 and  the time-and space-dependent force
	 	$$f_1(t,x)=e^{-t}(1+0.01\cos(\frac{\pi x}{10})) \text{~~or~~}f_2(t,x)=(1+t)^{-2}(1+0.01\cos(\frac{\pi x}{10})).$$
According to Remark \ref{u0}, $u_0 (x) $ and  $f_i(t, x)~(i=1,2)$ satisfy the sufficient conditions in Theorem \ref{sth.long}. 
Figure \ref{fig33} and Figure \ref{fig44} depict the time evolution of the numerical solution driven by the forcing terms $f_1(t,x)$ and $f_2(t,x)$, which decay exponentially and polynomially in time, respectively. 
The results show that the solution gradually converges to the positive constant 4.   
From a physical perspective, the results also indicate that the film surface will gradually flatten over time.

\begin{figure}[htbp]
	\centering
	\begin{minipage}[t]{0.48\textwidth}
		\centering
		\includegraphics[width=7cm]{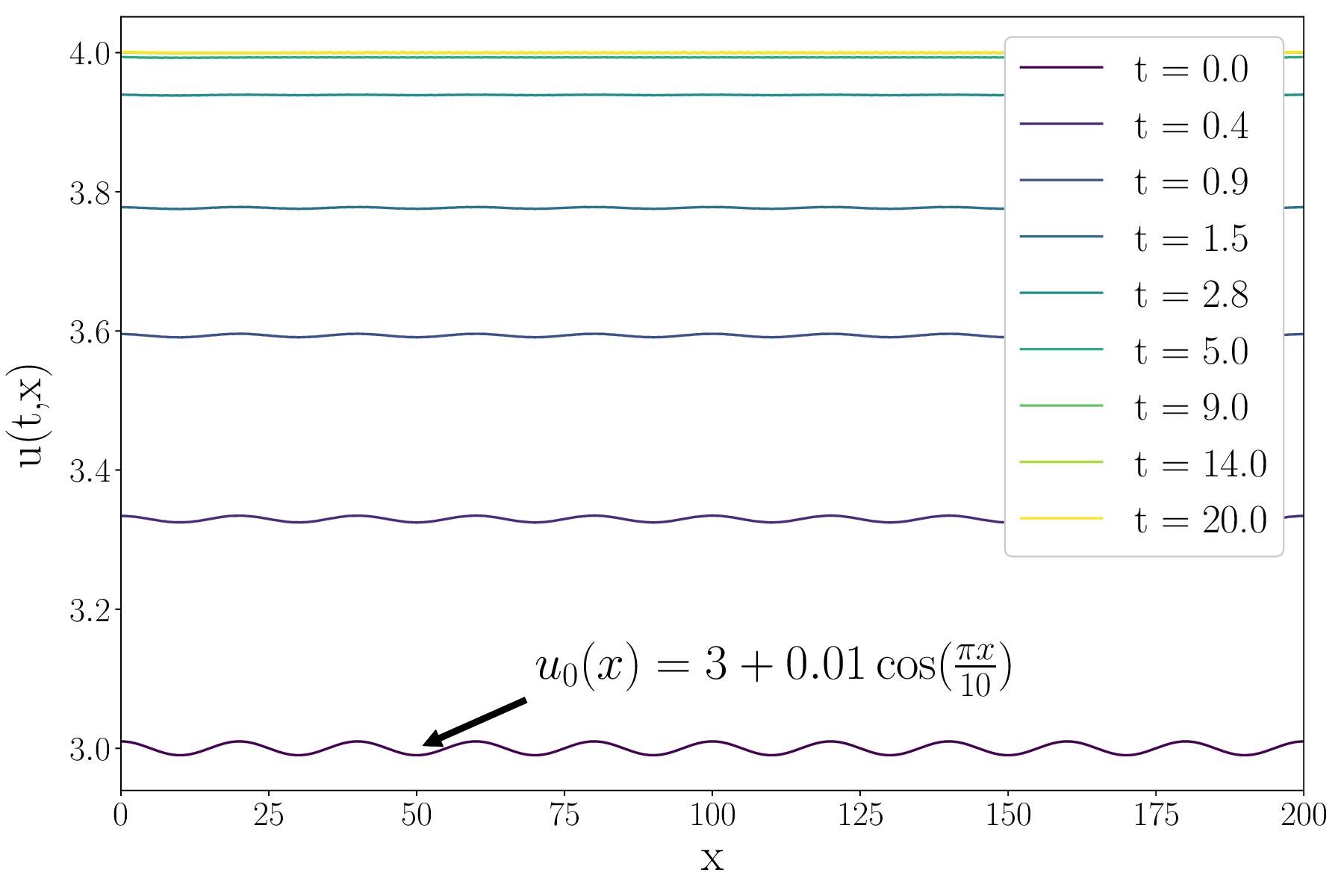}
		\caption{Film thickness evolution}
		\label{fig33}
	\end{minipage}
	\begin{minipage}[t]{0.48\textwidth}
		\centering
		\includegraphics[width=7cm]{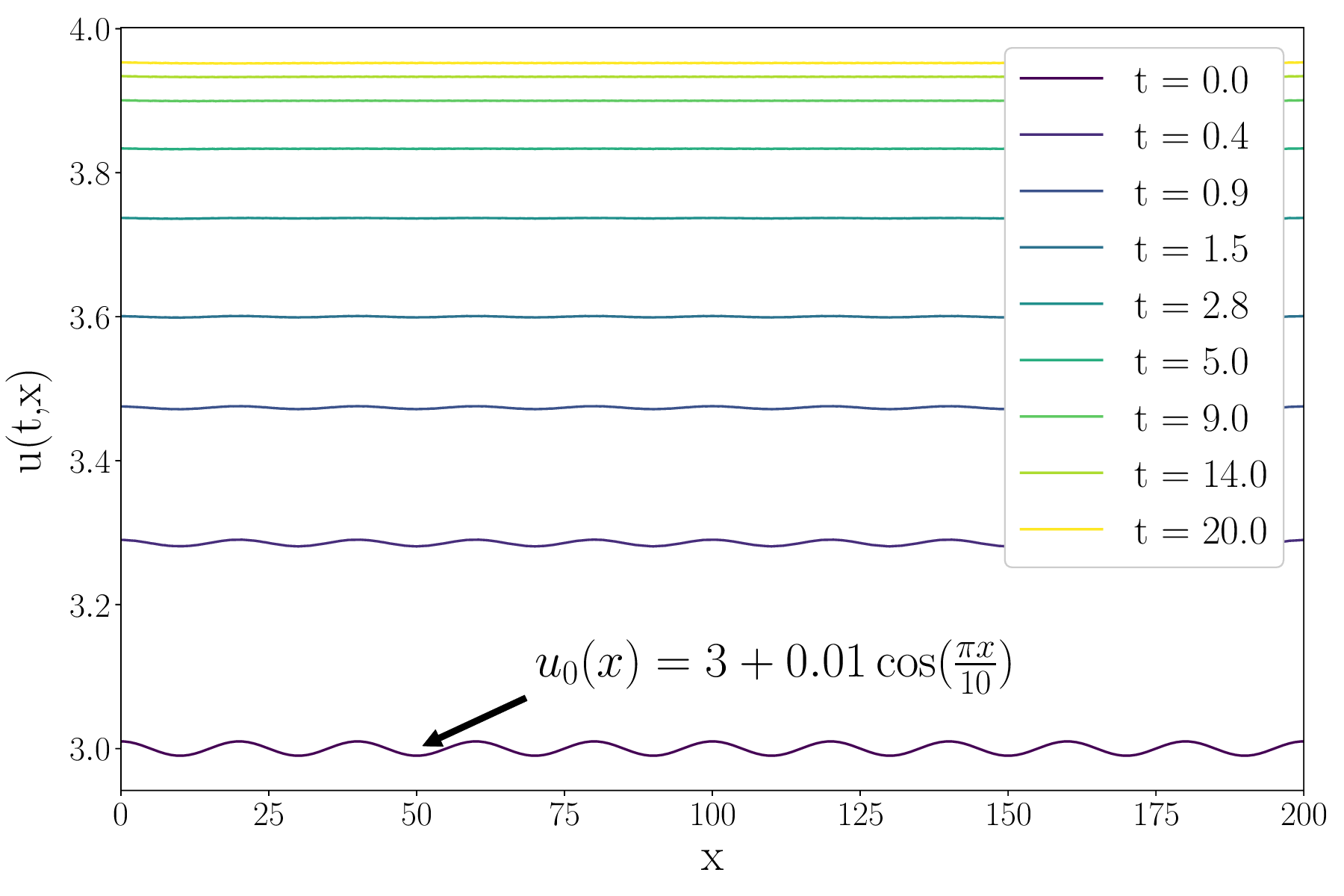}
		\caption{ Film thickness evolution }
		\label{fig44}
	\end{minipage}
\end{figure}

}
\end{example}


Further, figure \ref{fig32} presents the evolution of the error norm  $\| u - \bar{u} _0 - \frac{1}{|\Omega |} \int_0^\infty \int_{\Omega} f_i(s,x){\rm d}x{\rm d}s\|_{H^1{(\Omega)}}$ over time by computing its numerical approximation at each time step, clearly reflecting its decay behavior. 
It can be observed that for the exponentially decaying forcing term $f_1(t,x)$, the corresponding error norm decays significantly faster than that for the polynomially decaying force $f_2(t,x)$, which is completely consistent with the theoretical prediction of Theorem \ref{sth.long}.

\begin{figure}[htbp]
	\centering
	\includegraphics[width=8cm]{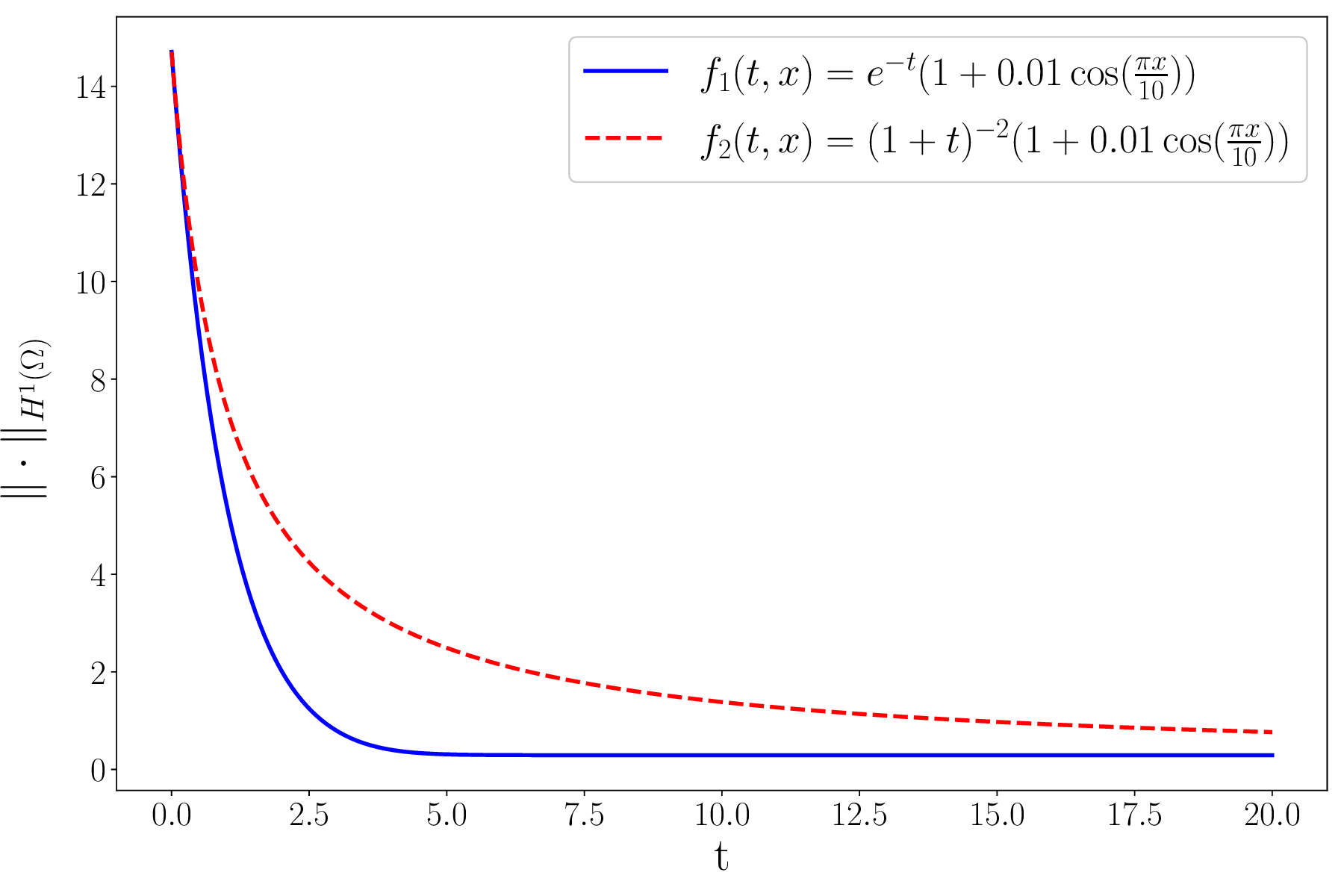} 
	\caption{Plot of $H^1(\Omega)$-norm over time $t$}
	\label{fig32}
\end{figure}




\section{Comments and further problems}

The core contribution of this paper is the development of a novel and versatile framework for the asymptotic analysis of non-autonomous thin-film equations, which is also applicable to other types of non-Newtonian thin-film equations, such as the Ellis law thin-film equation. 
The proposed "two-sided estimates" overcomes the essential difficulties caused by the lack of mass conservation and energy monotonicity, clearly reveals the regulatory mechanism of forcing terms and dissipation terms on the asymptotic behavior of solutions, and provides explicit two‑sided estimates for the convergence rate. 
These results not only extend existing theories for the homogeneous case but also lays a solid mathematical foundation for the quantitative analysis of related physical models. 

~

{\bf Further problems.} Several important questions naturally arise from this study and merit deeper exploration:

$\bullet$ It is natural to ask whether the technical condition imposed on the forcing terms in Theorem \ref{sth.long} can be weakened without compromising the characterization of the long-time behavior.

$\bullet$ What is the long-time behavior under more complex external forces, such as periodic, impulsive, or stochastic forcing?

$\bullet$ Is it possible to extend the analytical framework of this paper to more general nonlinear diffusion terms or nonlocal models? 

$\bullet$ How to investigate the long-time behavior of solutions in higher dimensions or on domains with complex geometry, such as those involving free boundaries?


~


\noindent {\bf Author Contributions} J.H Zhao and B. Guo wrote the main manuscript text and revised the manuscript. All authors reviewed the manuscript.

~

\noindent {\bf Funding} This paper has been partially supported by  the National Natural Science Foundation of China (NSFC) (No. 11301211) and the Natural Science Foundation of Jilin Province, China (No. 201500520056JH).

~

\noindent {\bf Data Availability} No datasets were generated or analyzed during the current study.

\end{document}